\theoremstyle{definition}
\newtheorem{theorem}{Theorem}[section]
\newtheorem{proposition}[theorem]{Proposition}
\newtheorem{lemma}[theorem]{Lemma}
\newtheorem{definition}[theorem]{Definition}
\newtheorem{question}[theorem]{Question}
\newtheorem{conjecture}[theorem]{Conjecture}
\newtheorem{remark}[theorem]{Remark}
 \numberwithin{equation}{section}
\numberwithin{equation}{section}
\newcommand{\pa}{\partial}
\newcommand{\dpa}{\bar{\partial}}
\newcommand{\CC}{\mathbb{C}}
\newcommand{\ric}{\operatorname{Ric}}
\newcommand{\tr}{\operatorname{tr}}
\begin{document}
  
\title{Rigidity of complete K\"ahler--Einstein metrics under cscK perturbations}
 \author{Zehao Sha}
\address{Institut Fourier, UMR 5582, Laboratoire de Math\'ematiques, 
Universit\'e Grenoble Alpes, CS 40700, 38058 Grenoble cedex 9, France}
\email{zhsha0251@gmail.com}
\begin{abstract}
In this paper, we study constant scalar curvature K\"ahler (cscK) metrics on complete non-compact K\"ahler--Einstein manifolds. We give sufficient conditions under which a cscK perturbation of a K\"ahler--Einstein metric must remain K\"ahler--Einstein. As a model case, we prove that the Bergman metric on a bounded strictly pseudoconvex domain is K\"ahler--Einstein whenever it has constant scalar curvature. In particular, combined with Huang--Xiao's resolution of Cheng's conjecture, this yields the ball characterization for smooth bounded strictly pseudoconvex domains.
\end{abstract}
\maketitle
 
\section{Introduction}
\subsection{Canonical metrics}
A classical problem in Kähler geometry is to find a canonical representative in a given Kähler class $[\omega]$. Among such canonical metrics, Kähler–Einstein metrics are among the most natural and most intensively studied. A K\"ahler metric $\omega$ is called K\"ahler-Einstein if the Ricci curvature is proportional to the metric, that is 
\begin{align*}
    \operatorname{Ric}(\omega) = \lambda \omega,
\end{align*}
for some $\lambda \in \mathbb{R}$. After rescaling the metric, one may normalize the Einstein constant to be $\lambda= 1,~0$ or $-1$. 

On compact K\"ahler manifolds, the existence of K\"ahler-Einstein metrics depends on the sign of the first Chern class $c_1(M)$:
\begin{enumerate}
    \item If $c_1(M)<0$, $M$ has an ample canonical bundle, a problem solved by Aubin \cite{aubin1976equations} and Yau \cite{yau1978ricci};
    \item If $c_1(M)=0$, $M$ is Calabi-Yau, as solved by Yau \cite{yau1978ricci};
    \item If \(c_{1}(M)>0\), \(M\) is Fano, there are classical obstructions to the existence of K\"ahler--Einstein metrics, going back to Matsushima \cite{matsushima1957structure}, Futaki \cite{futaki1983obstruction}, and Tian \cite{tian1997kahler}. In a series of papers, Chen--Donaldson--Sun \cite{chen2015kahler1,chen2015kahler2,chen2015kahler3} proved that \emph{K}-polystability of a Fano manifold is sufficient for the existence of a K\"ahler--Einstein metric, confirming the ``\emph{K}-polystability \(\Rightarrow\) KE'' direction of the Yau--Tian--Donaldson conjecture.

\end{enumerate}
For the non-compact case, some significant progress for the existence of the K\"ahler-Einstein metric has been made by Cheng-Mok-Yau \cite{Cheng1980OnTE,mokyau}, Tian-Yau \cite{tian1990complete,tian1991complete}, and Guedj-Kolev-Yeganefar \cite{guedj2013kahler}, etc.

More generally, on a compact Kähler manifold, a Kähler–Einstein metric can exist in a given Kähler class only if that class is proportional to $2\pi c_1(M)$. When this cohomological condition fails, constant scalar curvature Kähler (often abbreviated as cscK) metrics provide the natural broader class of canonical metrics. On compact K\"ahler manifolds, the average of the scalar curvature $\hat{R}$ is given by
\begin{align*}
    \hat{R}= \frac{2n\pi c_1(M) \cup [\omega]^{n-1}}{[\omega]^n}
\end{align*}
which depends only on the Kähler class $[\omega]$. 

\subsection{YTD conjecture for the cscK metric}
For a polarized manifold $(M,L)$, the Yau-Tian-Donaldson conjecture predicts that the existence of the cscK metric in \(c_1(L)\) should be equivalent to the K-stability of \((M, L)\), linking the $K$-energy's analytic behavior to algebraic stability via test configurations \cite{yau1992open,tian1997kahler,donaldson2002scalar}.

In \cite{chen2018existence}, Chen proposed a new continuity path for the cscK equation and proved the openness along this path. Subsequently, Chen and Cheng \cite{xxchen2021constant1,xxchen2021constant2} established the decisive a priori estimates and proved existence under the properness of the $K$-energy. We refer the reader to \cite{Stoppa09,berman2020regularity,berman2021variational,BHJ2019,BHJ2022,DarvasZhang2025,BJ2025YTD} for further developments.


\subsection{CscK metrics on complete noncompact manifolds}
Despite the substantial progress in the compact case, cscK metrics on complete non-compact K\"ahler manifolds remain much less understood. A basic analytic difficulty is the failure of a global $\partial\bar\partial$-lemma in the non-compact setting, so one cannot in general work within a fixed K\"ahler class in the usual compact sense. For this reason, throughout the paper we fix a background complete K\"ahler metric $\omega$ and restrict attention to perturbations of the form
\[
\omega_\varphi=\omega+\sqrt{-1}\,\partial\bar\partial\varphi.
\]
Despite substantial progress on cscK metrics on compact K\"ahler manifolds, the complete non-compact case remains far less understood. A principal analytic obstacle is the general failure of a global $\partial\bar\partial$-lemma in the non-compact setting, where Hodge decomposition (and related closed-range properties) need not hold.

Fix a background metric $\omega$ and consider a cscK perturbation $\omega_\varphi=\omega+\sqrt{-1}\,\partial\bar\partial\varphi$ with potential $\varphi$. Together with (a priori unknown) volume ratio $F$, the coupled cscK system reads
\begin{equation}\label{cscKeqo}
  \begin{cases}
    \omega_\varphi^{n} = e^{F}\,\omega^{n},\\[2pt]
    \Delta_{\varphi} F = -\hat{R} + \operatorname{tr}_{\varphi}\operatorname{Ric}(\omega),
  \end{cases}
\end{equation}
whose solvability on suitable complete manifolds yields a cscK metric of scalar curvature $\hat{R}$. Note that K\"ahler--Einstein metrics also solve \eqref{cscKeqo}. On compact manifolds, if a cscK metric is cohomologous to a K\"ahler--Einstein metric, then it must itself be K\"ahler--Einstein. By contrast, in the complete non-compact case there is no general rigidity principle forcing cscK perturbations to coincide with the background K\"ahler--Einstein metric.

On a negatively curved K\"ahler--Einstein manifold $(M,\omega)$, we normalize by \(\ric(\omega)=-\omega\). Then
the coupled system \eqref{cscKeqo} on $(M,\omega)$ reduces to
\[
\omega_\varphi^n=e^F\omega^n,
\qquad
\Delta_{\omega_\varphi}F=n-\tr_{\omega_\varphi}\omega,
\]
which is the form used throughout the paper.
\begin{question}\label{ques:main}
Can one give a \emph{sufficient condition} under which every cscK perturbation of a complete Kähler–Einstein metric is again Kähler–Einstein?
\end{question}

Question \ref{ques:main} admits an affirmative answer under two verifiable sets of hypotheses: a parabolic condition, and a non-parabolic condition combining a positive spectral gap, a \(C^2\)-bound on the perturbation, and an annular \(L^2\)-bound on the harmonic quantity \(F-\varphi\). Specifically, our main result is the following.
\begin{theorem}\label{MT1}
Let $(M^n,\omega)$ be a complete K\"ahler--Einstein manifold with negative scalar curvature $R(\omega)=-n$. Suppose there exists \(\varphi\in C^\infty(M)\) with \(\sup_M\varphi<\infty\) such that \(\omega_\varphi=\omega+\sqrt{-1}\partial\bar\partial\varphi\)
defines a complete cscK metric with \(R(\omega_\varphi)=R(\omega)\). Let \(F\in C^\infty(M)\) be defined by
\[
\omega_\varphi^n=e^F\omega^n.
\]
Assume that \(\ric(\omega_\varphi)\) is bounded from below and that \(F\) is bounded from below.

Then \(\omega_\varphi\) is K\"ahler--Einstein whenever one of the following conditions holds:
\begin{itemize}
    \item[(i)] \((M,\omega)\) is parabolic;

    \item[(ii)] \((M,\omega)\) is non-parabolic, the bottom of the \(L^2\)-spectrum of the Laplacian satisfies \(\lambda_1(\Delta_\omega)>0\), and \(\|\varphi\|_{C^2(M,\omega)}<\infty\). Assume moreover that, for some fixed point \(p\in M\), there exist constants \(C_0>0\) and
\[
0<\delta<\sqrt{\lambda_1(\Delta_{\omega_\varphi})}
\]
such that, for all \(R\ge 1\),
\[
\int_{B_{\omega_\varphi}(p,2R)\setminus B_{\omega_\varphi}(p,R)}
|F-\varphi|^2\,\omega_\varphi^n
\le C_0 e^{2\delta R}.
\]
\end{itemize}
\end{theorem}

\begin{remark}[Calabi-Yau case]
When the background metric is Calabi-Yau, rigidity under cscK perturbations is much clearer (see Proposition~\ref{CY case}).
\end{remark}

Recall that a complete Riemannian manifold is \emph{non-parabolic} if it admits a minimal positive Green's function, and \emph{parabolic} otherwise. In our framework, as a benefit, the potential $\varphi$ is $\omega$-subharmonic, which is enough for the parabolic case. By contrast, in the non-parabolic case the natural quantity is not $\varphi$ itself but the $\omega_\varphi$-harmonic function \(F-\varphi\), which we will explain later. Accordingly, Theorem~\ref{MT1} (ii) imposes an annular $L^2$ condition on $F-\varphi$ with respect to $\omega_\varphi$, and the spectral gap is then used to force the vanishing of $F-\varphi$. Note that the \(C^2\)-bound on \(\varphi\) together with Lemma~\ref{lem:vol_compa} implies that \(\omega_\varphi\) is uniformly equivalent to \(\omega\). Hence the positivity of the bottom of the spectrum passes from \(\omega\) to \(\omega_\varphi\).
\subsection{Bounded strictly pseudoconvex domains as a model case}

As a concrete model case of Question~\ref{ques:main}, we next consider bounded strictly pseudoconvex domains, where the rigidity problem admits a particularly geometric form. Let $\Omega \subset \CC^n$ be a bounded strictly pseudoconvex domain. In \cite{Cheng1980OnTE}, Cheng and Yau proved that there is a unique complete Kähler-Einstein metric on $\Omega$ constructed by a global strictly plurisubharmonic defining function if $\partial \Omega \in C^7$. This leads to the following natural question: \emph{Can a bounded strictly pseudoconvex domain carry a complete cscK metric that is not K\"ahler--Einstein?}

On a bounded strictly pseudoconvex domain, two natural types of complete K\"ahler metrics arise. The first is the Bergman metric $\omega_B$. The second consists of the metrics $\omega_\rho$ associated with strictly plurisubharmonic defining functions. Any two metrics of the latter type differ by a global $\sqrt{-1}\,\partial\bar\partial$-term. We refer to Section~5 for the relevant background and details. For these two types of metrics, we prove the following rigidity statements.
\begin{theorem}\label{MT2}
Let $\Omega \subset \mathbb{C}^n$ be a bounded strictly pseudoconvex domain with $\partial \Omega \in C^2$.
\begin{enumerate}
  \item If $\omega_\rho$ is cscK and $\partial \Omega \in C^8$, then $\omega_\rho$ coincides with the unique K\"ahler–Einstein metric constructed by Cheng-Yau.
  \item If the Bergman metric $\omega_B$ is cscK, then $\omega_B$ is K\"ahler–Einstein. Moreover, if $\partial \Omega \in C^\infty$, then $\Omega$ is biholomorphic to the unit ball.
\end{enumerate}
\end{theorem}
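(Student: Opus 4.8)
The plan is to deduce both statements from Theorem~\ref{MT1}, taking as background the complete Kähler--Einstein metric $\omega_{KE}$ of Cheng--Yau, normalized so that $\operatorname{Ric}(\omega_{KE})=-\omega_{KE}$; this metric is complete with negative scalar curvature, hence an admissible background. In each case the candidate metric ($\omega_\rho$ in (1), $\omega_B$ in (2)) will be realized as a bounded cscK perturbation $\omega_{KE}+\sqrt{-1}\pa\dpa\varphi$, the hypotheses of Theorem~\ref{MT1} will be verified, and I will conclude that the perturbation is again Kähler--Einstein. For (1) the uniqueness part of the Cheng--Yau theorem then forces $\omega_\rho=\omega_{KE}$; for (2), once $\omega_B$ is Kähler--Einstein, the smoothness of $\pa\Omega$ lets me invoke Huang--Xiao's resolution of Cheng's conjecture to conclude that $\Omega$ is biholomorphic to the unit ball.

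The first step is to produce the global potential $\varphi$ and show it is bounded. Since $\Omega\subset\CC^n$ is Stein, each of $\omega_{KE}$, $\omega_\rho$, $\omega_B$ is given by an explicit global Kähler potential (respectively the solution of the Cheng--Yau Monge--Amp\`ere equation, the defining-function expression $-\log(-\rho)$, and $\log K_\Omega$ for the Bergman kernel $K_\Omega$), so the differences of potentials $\varphi_\rho,\varphi_B$ are globally defined smooth functions with $\omega_\rho=\omega_{KE}+\sqrt{-1}\pa\dpa\varphi_\rho$ and $\omega_B=\omega_{KE}+\sqrt{-1}\pa\dpa\varphi_B$. For (1), the Cheng--Yau boundary expansion of the Kähler--Einstein potential together with Fefferman's expansion shows that $\rho$ and the Monge--Amp\`ere solution vanish to the same order at $\pa\Omega$ with comparable gradients, so $\varphi_\rho=\log(\rho/u_{CY})+O(1)$ is bounded. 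For (2), Fefferman's asymptotic $K_\Omega\sim c\,(-\rho)^{-(n+1)}(1+o(1))$ matches the leading term of the Kähler--Einstein potential exactly under the normalization $\operatorname{Ric}=-\omega_{KE}$ --- this is the point where the constant $n+1$ enters, and why on the ball $\omega_B$ genuinely equals $\omega_{KE}$ --- so $\varphi_B$ is bounded as well. In both cases I normalize $\sup_\Omega\varphi=0$.

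Next I verify the remaining quantitative hypotheses. Completeness of $\omega_\rho$ and of $\omega_B$ is classical. All three metrics are uniformly equivalent on $\Omega$ --- on compact subsets trivially, and near $\pa\Omega$ because each is asymptotically complex hyperbolic --- so the volume ratio $\omega_{KE}^n/\omega_\varphi^n$ is bounded above (and below), and the boundary asymptotics furnish uniform curvature bounds, whence $\operatorname{Ric}(\omega_\varphi)$ is bounded from below. It remains to handle the dichotomy in Theorem~\ref{MT1}. Since $\omega_{KE}$ is asymptotically complex hyperbolic it is non-parabolic with $\lambda_1(\Delta_{\omega_{KE}})=n^2>0$, so I am in the second alternative and must check the growth bound. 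As $\varphi$ is bounded,
\[
\int_{B_{2r}(p)\setminus B_r(p)}|\varphi|^2\,\omega_{KE}^n\;\le\;\|\varphi\|_\infty^2\,\operatorname{Vol}\bigl(B_{2r}(p)\setminus B_r(p)\bigr)\;\le\;C_0\,e^{2\delta r},
\]
with $\delta=\sqrt{\lambda_1}=n$, which is admissible because the volume entropy of an asymptotically complex hyperbolic metric equals $2n=2\sqrt{\lambda_1}$. Thus the growth hypothesis holds at the borderline exponent $\delta=\sqrt{\lambda_1}$, and Theorem~\ref{MT1} applies: $\omega_\rho$ (resp.\ $\omega_B$) is Kähler--Einstein. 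Statement (1) then follows from Cheng--Yau uniqueness, and statement (2) from Huang--Xiao.

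I expect the main obstacle to be the boundary asymptotic analysis behind the boundedness of $\varphi$ and the uniform bounds: one must match the leading boundary behavior of the three potentials precisely enough that their differences are $O(1)$ rather than merely $O(\log(-\rho))$, which is exactly where Fefferman's expansion, the Cheng--Yau estimates, and the normalization constant $n+1$ intervene, and the reason for the regularity thresholds $\pa\Omega\in C^8$ in (1) and $\pa\Omega\in C^\infty$ in (2). A secondary delicate point is that the volume growth of $\omega_{KE}$ saturates the spectral bound, so the growth condition is available only at the critical exponent $\delta=\sqrt{\lambda_1}$; one must make sure Theorem~\ref{MT1} admits this endpoint, which it does since its hypothesis is $0<\delta\le\sqrt{\lambda_1}$.
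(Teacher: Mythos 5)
Your strategy of funneling both parts of Theorem~\ref{MT2} through Theorem~\ref{MT1} is not the paper's route, and it has two genuine gaps. The first is your verification of the annular $L^2$ hypothesis \eqref{eq:L2}. You bound $\int_{B_{2r}(p)\setminus B_r(p)}|\varphi|^2\,\omega_{KE}^n$ by $\|\varphi\|_\infty^2\operatorname{Vol}\bigl(B_{2r}(p)\setminus B_r(p)\bigr)$ and assert this is $\le C_0e^{2\delta r}$ with $\delta=\sqrt{\lambda_1}=n$. But the annulus reaches out to radius $2r$: with your own normalization the volume entropy is $2n=2\sqrt{\lambda_1}$, so $\operatorname{Vol}(B_{2r})\sim e^{2n\cdot 2r}=e^{4\sqrt{\lambda_1}\,r}$, which strictly exceeds the admissible bound $e^{2\sqrt{\lambda_1}\,r}$. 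Mere boundedness of $\varphi$ therefore does not place you inside the hypotheses of Theorem~\ref{MT1}; you would need quantitative decay of $\varphi$ along the annuli, which is exactly the boundary asymptotics you were hoping not to have to control. The second gap concerns regularity: part (2) claims $\omega_B$ is K\"ahler--Einstein already for $\partial\Omega\in C^2$, while your argument needs Fefferman's expansion of the Bergman kernel and the boundary asymptotics of the Cheng--Yau metric (to get $\varphi_B\in L^\infty$, the two-sided volume ratio bound, and the Ricci lower bound), all of which require far more boundary regularity than $C^2$.

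The paper argues differently and more cheaply, without invoking Theorem~\ref{MT1} at all. For (1), it writes $\tilde\rho=u\rho$ with $u\in C^{k-1}(\bar\Omega)$ and $u=1+o(1)$ at $\partial\Omega$, so $\varphi=-\log u\to0$ and $F\to0$ at the boundary; the maximum principle gives $F=\varphi$, and the Monge--Amp\`ere comparison argument of Proposition~\ref{cscK is KE 2} forces $\varphi\equiv0$ --- no Green's function or parabolicity enters. For (2), it uses the Bergman invariant $B=\det(\omega_B)/K$: the identity $\operatorname{Ric}(\omega_B)+\sqrt{-1}\,\partial\bar\partial\log B=-\omega_B$, traced against the cscK condition, makes $\log B$ harmonic; Diederich's theorem supplies the constant boundary value $(n+1)^n\pi^n/n!$ at $C^2$ strictly pseudoconvex points, so the maximum principle gives $\log B\equiv\mathrm{const}$ and hence $\operatorname{Ric}(\omega_B)=-\omega_B$, after which Huang--Xiao yields the ball when $\partial\Omega\in C^\infty$. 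This is precisely what makes the $C^2$ hypothesis in (2) attainable. To salvage your approach you would have to restrict (2) to smooth boundaries and replace boundedness of $\varphi$ by a proved decay rate on annuli compatible with \eqref{eq:L2}.
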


Theorem~\ref{MT2} (2) may be viewed as a cscK extension of Huang-Xiao's rigidity theorem for K\"ahler--Einstein Bergman metrics \cite[Theorem~1]{huang2021bergman}. Indeed, once the Bergman metric is assumed to have constant scalar curvature, our rigidity result forces it to be K\"ahler--Einstein. Then Huang-Xiao's result yields the ball characterization on bounded smooth strictly pseudoconvex domains.
\subsection*{Acknowledgement}
The author would like to express his sincere gratitude to his advisors, Gérard Besson and Hervé Gaussier, for their constant encouragement, guidance, and helpful discussions. The author is also grateful to the referees for their valuable comments and suggestions.
\section{Preliminaries}

\subsection{Notations}

Let \((M^n,\omega)\) be an \(n\)-dimensional complete K\"ahler manifold. In local holomorphic coordinates \((z^1,\ldots,z^n)\), we write
\[
\omega=\sqrt{-1}\,g_{i\bar j}\,dz^i\wedge d\overline{z}^j,
\]
where we adopt the Einstein summation convention. We use \(\partial_i\) and \(\partial_{\bar j}\) as shorthand for \(\frac{\partial}{\partial z^i}\) and \(\frac{\partial}{\partial \overline{z}^j}\), respectively, and denote by \((g^{i\bar j})\) the inverse matrix of \((g_{i\bar j})\). Given two tensors \(A\) and \(B\) of the same type, their Hermitian inner product with respect to \(\omega\) is denoted by \(\langle A,B\rangle_\omega\), and the corresponding norm by \(|A|_\omega\).

The Ricci form is given by
\[
\ric(\omega)=\sqrt{-1}\,R_{i\bar j}\,dz^i\wedge d\overline{z}^j,
\qquad
R_{i\bar j}
=
-\partial_i\partial_{\bar j}\log\det(g_{k\bar l}),
\]
and the scalar curvature is
\[
R(\omega)=\tr_\omega\ric(\omega)=g^{i\bar j}R_{i\bar j}.
\]

For \(f\in C^\infty(M,\mathbb R)\), the gradient vector field is
\[
\nabla f
=
g^{i\bar j}\bigl(\partial_i f\,\partial_{\bar j}+\partial_{\bar j}f\,\partial_i\bigr),
\]
and the Laplacian is defined by the trace of the complex Hessian $\sqrt{-1}\partial\bar\partial f$ with respect to $\omega$, namely
\[
\Delta_\omega f
:=
\tr_\omega\bigl(\sqrt{-1}\partial\bar\partial f\bigr)
=
g^{i\bar j}\partial_i\partial_{\bar j}f,
\]

\subsection{Omori--Yau's maximum principle}

We record the generalized maximum principle on complete K\"ahler manifolds in the two forms used later in the paper.

\begin{proposition}[Omori \cite{omori1967isometric}, Yau \cite{yau1975harmonic}]
Let \((M,\omega)\) be a complete K\"ahler manifold, and let \(u\in C^2(M)\) satisfy \(\sup_M u<+\infty\).

\begin{itemize}
    \item[(i)] If the sectional curvature of \(\omega\) is bounded from below, then there exists a sequence \(\{z_k\}\subset M\) such that
    \[
    \lim_{k\to\infty}u(z_k)=\sup_M u,
    \qquad
    \lim_{k\to\infty}|\nabla u(z_k)|_\omega=0,
    \qquad
    \limsup_{k\to\infty}\sqrt{-1}\partial\bar\partial u(z_k)\le 0,
    \]
    where the last inequality is understood in the sense of Hermitian forms.

    \item[(ii)] If the Ricci curvature of \(\omega\) is bounded from below, then there exists a sequence \(\{z_k\}\subset M\) such that
    \[
    \lim_{k\to\infty}u(z_k)=\sup_M u,
    \qquad
    \lim_{k\to\infty}|\nabla u(z_k)|_\omega=0,
    \qquad
    \limsup_{k\to\infty}\Delta_\omega u(z_k)\le 0.
    \]
\end{itemize}
\end{proposition}

\section{Basic estimates for the cscK metric}
Let \((M^n,\omega)\) be a complete K\"ahler--Einstein manifold with negative scalar curvature, normalized by \(R(\omega)=-n\). Suppose \(\tilde{\omega}\) is a complete cscK metric on \(M\) with
\[
R(\tilde{\omega})=R(\omega)=-n.
\]
Then there exists a smooth function \(F\) such that
\begin{equation}\label{csck eq:vol}
    \tilde{\omega}^n=e^F \omega^n,
\end{equation}
and 
\begin{equation}\label{csck eq:F}
    \Delta_{\tilde{\omega}} F =n-\operatorname{tr}_{\tilde{\omega}}\omega.
\end{equation}
We can verify that 
\begin{align*}
    R(\tilde{\omega})&=\operatorname{tr}_{\tilde{\omega}} \operatorname{Ric}(\tilde{\omega})\\
    &=-\operatorname{tr}_{\tilde{\omega}} \sqrt{-1}\partial\bar{\partial}\log\left(\frac{\tilde{\omega}^n}{\omega^n}\right)+\operatorname{tr}_{\tilde{\omega}} \operatorname{Ric}(\omega)\\
    &=-\Delta_{\tilde{\omega}}F -\operatorname{tr}_{\tilde{\omega}}\omega\\
    &=-n.
\end{align*}
Throughout this section, we work under the above normalization.

We first present a basic comparison for the volume form of a cscK metric.
\begin{lemma} \label{lem:vol_compa}
 Suppose \(M\) is a complete K\"ahler manifold. Let \(\omega\) be a complete K\"ahler--Einstein metric and let \(\tilde{\omega}\) be a complete cscK metric with \(R(\tilde{\omega})=R(\omega)\). If \(\ric(\tilde{\omega})\) is bounded from below and the log volume ratio \(F\) is bounded from below, then
    \begin{align} \label{comparison vol 1}
        \tilde{\omega}^n \geq \omega^n.
    \end{align}
    Moreover, if the equality holds at any interior point $p \in M$, then $\tilde{\omega} = \omega$ on $M$.
\end{lemma}
\begin{proof}
Assume $R(\tilde{\omega})=R(\omega)=-n$. Writing $u=\omega^n / \tilde{\omega}^n  >0$, we compute
\begin{align*}
    \Delta_{\tilde{\omega}} \log u &= \Delta_{\tilde{\omega}} \log \left(\omega^n/\tilde{\omega}^n\right)\\
    &= \operatorname{tr}_{\tilde{\omega}}\omega -n \\
    & \geq n u^{\frac{1}{n}} - n,
\end{align*}
where the last inequality follows from the arithmetic-geometric inequality. 

Then we obtain
\begin{align*}
    \Delta_{\tilde{\omega}} \log u = \frac{\Delta_{\tilde{\omega}} u}{ u } - \frac{|\nabla u|^2_{\tilde{\omega}}}{u^2}
    \geq n u^{\frac{1}{n}} - n,
\end{align*}
which leads to
\begin{align*}
    \Delta_{\tilde{\omega}} u - \frac{|\nabla u|^2_{\tilde{\omega}}}{u}\geq n u^{\frac{1}{n}+1} - n u.
\end{align*}

Since \(u\) is bounded from above and \(\ric(\tilde{\omega})\) is bounded from below, the generalized maximum principle applied to \(u\) with respect to \(\tilde{\omega}\) yields a sequence \(\{z_\alpha\}\subset M\) such that
\begin{align*}
    0 \geq \limsup_{\alpha \rightarrow \infty}\Delta_{\tilde{\omega}} u(z_\alpha) \geq n \lim_{\alpha \rightarrow \infty}u^{\frac{1}{n}+1}(z_\alpha) - n\lim_{\alpha \rightarrow \infty} u(z_\alpha).
\end{align*}
This implies $ \sup_M u \leq 1$, yielding the desired inequality.

Observe that,
\begin{align*}
    \Delta_{\tilde{\omega}} \log u \ge n u^{\frac{1}{n}} - n 
    \ge  \log u.
\end{align*}
Suppose there is a point $p \in M$ such that $u(p)=\sup_M u =1$. By the maximum principle, we have $u\equiv 1$ in $M$ and $\tilde{\omega}^n =\omega^n$. Then \eqref{csck eq:F} yields $\operatorname{tr}_{\tilde{\omega}}\omega= n$. Consequently, we have $\tilde{\omega} =\omega$ thanks to the equality case of the arithmetic-geometric inequality.
\end{proof}

\begin{remark}
Under the assumptions of Lemma~\ref{lem:vol_compa}, we have
\[
F=\log\frac{\tilde\omega^n}{\omega^n}\ge 0.
\]
Moreover, if \(F(p)=0\) at some interior point \(p\in M\), then \(\tilde\omega^n=\omega^n\) at \(p\), hence \(\tilde\omega=\omega\) on \(M\).
\end{remark}

The following trace-type inequality first comes back to the $C^2$-estimates of Aubin\cite{aubin1976equations} and Yau\cite{yau1978ricci} for the complex Monge--Amp\`ere equation. In the form used here, it is due to Siu (see \cite[(1.9.2)]{siu1987lectures}). In general, this inequality holds for any two K\"ahler metrics \(\omega\) and \(\tilde{\omega}\) provided that the holomorphic bisectional curvature of \(\omega\) is bounded from below.

\begin{lemma}\label{Lem1}
Suppose the holomorphic bisectional curvature of \(\omega\) is bounded from below. Then there exists a constant \(B\), depending on this lower bound, such that
\begin{equation}\label{lem1 ineq}
\Delta_{\tilde{\omega}} \log \bigl(\tr_{\omega}\tilde{\omega}\bigr)
\ge
B\,\tr_{\tilde{\omega}}\omega
-
\frac{\tr_{\omega}\ric(\tilde{\omega})}{\tr_{\omega}\tilde{\omega}}.
\end{equation}
\end{lemma}

We now derive a quantitative comparison estimate. Combined with the volume comparison in Lemma~\ref{lem:vol_compa}, the next proposition shows that, under the assumptions below, a cscK perturbation \(\tilde\omega=\omega+\sqrt{-1}\partial\bar\partial\varphi\) with \(\|\varphi\|_{C^2(M,\omega)}<\infty\) is bi-Lipschitz equivalent to \(\omega\).
\begin{proposition}\label{quasi-iso}
Let \((M^n,\omega)\) be a complete K\"ahler--Einstein manifold. Assume that the holomorphic bisectional curvature of \(\omega\) is bounded from below. Let \(\tilde{\omega}\) be a complete cscK metric on \(M\) such that
\(\ric(\tilde{\omega}) \) is bounded from below and \(\|F\|_{C^2(M,\omega)}<\infty\). If
\[
\sup_M \tr_\omega \tilde{\omega}<+\infty,
\]
then there exist constants \(C\ge C'>0\), depending only on \(n\), the lower bound of the holomorphic bisectional curvature of \(\omega\), and \(\|F\|_{C^2(M,\omega)}\), such that
\begin{equation}\label{bi Lip ineq}
C'\omega\le \tilde{\omega}\le C\omega
\qquad\text{on }\quad M.
\end{equation}
\end{proposition}
\begin{proof}
It follows from \eqref{csck eq:vol} that
\begin{equation}\label{cscK local}
\ric(\tilde{\omega})=-\omega-\sqrt{-1}\partial\bar\partial F.
\end{equation}
Then \eqref{lem1 ineq} becomes
\begin{align*}
\Delta_{\tilde{\omega}}\log(\tr_\omega\tilde{\omega})
&\ge
B\,\tr_{\tilde{\omega}}\omega
-
\frac{\tr_\omega \ric(\tilde{\omega})}{\tr_\omega\tilde{\omega}} \\
&=
B\,\tr_{\tilde{\omega}}\omega
+
\frac{n+\Delta_\omega F}{\tr_\omega\tilde{\omega}}.
\end{align*}
Thanks to the Cauchy--Schwarz inequality,
\[
(\tr_\omega\tilde{\omega})(\tr_{\tilde{\omega}}\omega)\ge n^2.
\]
Hence there exists a constant \(C_1>0\), depending on \(n\) and \(\|\Delta_\omega F\|_{L^\infty(M)}\), such that
\[
\Delta_{\tilde{\omega}}\log(\tr_\omega\tilde{\omega})
\ge
(B-C_1)\tr_{\tilde{\omega}}\omega.
\]
Take
\[
C_2=-B+C_1+1.
\]
Using \eqref{csck eq:F}, we obtain
\[
\Delta_{\tilde{\omega}}\Bigl(\log(\tr_\omega\tilde{\omega})-C_2F\Bigr)
\ge
\tr_{\tilde{\omega}}\omega-C_2 n.
\]

Since \(\log(\tr_\omega\tilde{\omega})-C_2F\) is bounded from above and \(\ric(\tilde{\omega})\) is bounded from below, the generalized maximum principle applied with respect to \(\tilde{\omega}\) yields a sequence \(\{z_\alpha\}\subset M\) such that
\begin{align*}
    0 \geq \limsup_{\alpha \rightarrow \infty} \Delta_{\tilde{\omega}} \left(\log \left( \operatorname{tr}_{\omega} \tilde{\omega} \right) - C_2F\right)(z_\alpha) \geq   \lim_{\alpha\rightarrow \infty}\operatorname{tr}_{\tilde{\omega}}\omega(z_\alpha) - C_2n,
\end{align*}
which implies 
\begin{align*}
    \lim_{\alpha\rightarrow \infty}\operatorname{tr}_{\tilde{\omega}}\omega(z_\alpha) \leq C_2n.
\end{align*}
If $z_\alpha \rightarrow p \in M$, then taking normal coordinates centered at $p$ such that $\omega$ is identity and $\tilde{\omega}$ is diagonal, we have
\begin{align*}
    \operatorname{tr}_{\tilde{\omega}}\omega(p) = \sum_i \tilde{g}^{i\bar{i}}(p) \leq C_2n.
\end{align*}
This yields at point $p$, for any $k$,
\begin{align}\label{2-2}
    \frac{1}{\tilde{g}_{k\bar{k}}(p)} = \tilde{g}^{k\bar{k}}(p) \leq \sum_i \tilde{g}^{i\bar{i}}(p) \leq C_2n.
\end{align}
From (\ref{csck eq:vol}), we have
\begin{align}\label{2-3}
\prod_i \tilde{g}_{i\bar{i}}(p) = e^{F(p)} \leq e^{\sup_M F}  .
\end{align}
Combining (\ref{2-2}) and (\ref{2-3}), for any $k$, we obtain
\begin{align} \label{2-4}
    \tilde{g}_{k\bar{k}}(p) = \frac{\prod_i \tilde{g}_{i\bar{i}}(p) }{\prod_{i\neq k} \tilde{g}_{i\bar{i}}(p)} \leq \left(C_2n\right)^{n-1}e^{\sup_M F}:=\frac{C_3}{n}
\end{align}
where \(C_3\) depends only on \(n\), the lower bound of the holomorphic bisectional curvature of \(\omega\), and \(\|F\|_{C^2(M,\omega)}\). In particular, from (\ref{2-4}), we have
\begin{align*}
    \operatorname{tr}_{\omega} \tilde{\omega} (p) = \sum_{k}\tilde{g}_{k\bar{k}}(p) \leq C_3.
\end{align*}
If $ \{z_\alpha\}$ does not converge to any interior point, then by the generalized maximum principle, we have
\begin{align*}
    \operatorname{tr}_{\tilde{\omega}}\omega(z_\alpha) \leq C_2n + \frac{1}{\alpha}.
\end{align*}
Taking normal coordinates centered at $z_\alpha$ and following the same approach as above, we obtain
\begin{align*}
   \operatorname{tr}_{\omega} \tilde{\omega} (z_\alpha) \leq C_3 + O\left(\frac{1}{\alpha}\right).
\end{align*}
Let $\alpha \rightarrow \infty$, we have
\begin{align*}
    \lim_{\alpha \rightarrow \infty}\operatorname{tr}_{\omega} \tilde{\omega} (z_\alpha)\leq C_3.
\end{align*}
Hence for any $x \in M$, we obtain
\begin{align*}
    \log \left( \operatorname{tr}_{\omega} \tilde{\omega} \right)(x) - C_2F(x) &\leq \sup_M \left(\log \left( \operatorname{tr}_{\omega} \tilde{\omega} \right) - C_2F\right) \\
    &\leq  \lim_{\alpha \rightarrow \infty}\log \operatorname{tr}_{\omega} \tilde{\omega}(z_\alpha) - C_2 \lim_{\alpha \rightarrow \infty}F(z_\alpha)\\
    & \leq \log(C_3),
\end{align*}
thanks to $F\ge 0$. This implies 
\begin{align} \label{2-5}
    \sup_M \operatorname{tr}_{\omega} \tilde{\omega} \leq C_4 ,
\end{align}
where $C_4$ depends on the same factors as $C_3$. Now, if we choose a normal coordinate at any point $x \in M$ such that $\omega$ is identity and $\tilde{\omega}$ is diagonal, it follows from (\ref{comparison vol 1}) and (\ref{2-5}),
\begin{align*}
    \tilde{g}_{k\bar{k}} = \frac{\prod_i \tilde{g}_{i\bar{i}}}{\prod_{i\neq k} \tilde{g}_{i\bar{i}}}\geq C_5 ,
\end{align*}
which implies
\[
\tilde{\omega}\ge C'\omega
\]
for some constant \(C'>0\). Together with \eqref{2-5}, this proves \eqref{bi Lip ineq}.
\end{proof}
\section{CscK metrics on a complete K\"ahler-Einstein manifold}\label{sec:main-sec}
\subsection{Asymptotic rigidity and motivating examples}
Let \((M,\omega)\) be a complete K\"ahler--Einstein manifold, and let
\[
\omega_\varphi:=\omega+\sqrt{-1}\partial\bar\partial\varphi
\]
be a complete cscK perturbation of \(\omega\), where \(\varphi\in C^\infty(M)\). Suppose
\begin{equation}\label{csckveq:vol}
\omega_\varphi^n=e^F\omega^n,
\end{equation}
and
\begin{equation}\label{csckveq:F}
\Delta_{\omega_\varphi}F=n-\tr_{\omega_\varphi}\omega,
\end{equation}
for some \(F\in C^\infty(M)\). Without loss of generality, by adding a constant to \(\varphi\), we normalize \(\sup_M\varphi=0\).

On compact K\"ahler manifolds, cscK metrics cohomologous to a K\"ahler--Einstein metric are automatically K\"ahler--Einstein:

\begin{proposition}[A well-known fact]\label{cscKisKE}
Let \((M,\omega)\) be a compact K\"ahler manifold. Suppose that \(2\pi c_1(M)=\lambda[\omega]\) for some constant \(\lambda\), where \(c_1(M)\) is the first Chern class of \(M\). If \(\omega\) is a cscK metric, then \(\omega\) is K\"ahler--Einstein.
\end{proposition}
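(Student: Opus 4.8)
The plan is to use the fact that on a compact K\"ahler manifold the Ricci form $\operatorname{Ric}(\omega)$ is a closed real $(1,1)$-form representing $2\pi c_1(M)$ in de Rham cohomology. The cohomological hypothesis $2\pi c_1(M)=\lambda[\omega]$ then gives $[\operatorname{Ric}(\omega)]=\lambda[\omega]$, so that $\operatorname{Ric}(\omega)-\lambda\omega$ is an exact real $(1,1)$-form. Invoking the global $\partial\bar\partial$-lemma, which is available precisely because $M$ is compact K\"ahler, I would first produce a real potential $f\in C^\infty(M)$ with
\[
\operatorname{Ric}(\omega)-\lambda\omega=\sqrt{-1}\,\partial\bar\partial f.
\]
The strategy is then to show that the scalar-curvature hypothesis forces $\sqrt{-1}\,\partial\bar\partial f$ to vanish outright, not merely to be exact.

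Next I would take $\operatorname{tr}_\omega$ of both sides. Using $\operatorname{tr}_\omega\omega=n$ and $\operatorname{tr}_\omega(\sqrt{-1}\,\partial\bar\partial f)=\Delta_\omega f$, this yields
\[
R(\omega)-\lambda n=\Delta_\omega f.
\]
Since $\omega$ is cscK, the left-hand side is a constant, so $\Delta_\omega f$ is constant on $M$. To pin down that constant I would integrate against $\omega^n$: because $\Delta_\omega f\,\omega^n=n\,\sqrt{-1}\,\partial\bar\partial f\wedge\omega^{n-1}$ is exact, with $d\omega=0$ and the compactness of $M$ entering here, Stokes' theorem gives $\int_M\Delta_\omega f\,\omega^n=0$, forcing $\Delta_\omega f\equiv 0$. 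A harmonic function on a compact manifold is constant, so $f$ is constant, hence $\sqrt{-1}\,\partial\bar\partial f=0$, and therefore $\operatorname{Ric}(\omega)=\lambda\omega$; as a byproduct one reads off $\lambda=R(\omega)/n$.

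There is no serious obstacle here: the only nonelementary ingredient is the $\partial\bar\partial$-lemma, and the remainder is the compact maximum principle packaged as the statement that a function with constant Laplacian and zero integral must vanish. The point I would flag is conceptual rather than technical, namely that compactness is used twice and essentially, once to obtain the global potential $f$ and once to upgrade the constancy of $\Delta_\omega f$ to its vanishing. It is exactly this double use of compactness that fails on complete non-compact manifolds, which is why the rigidity pursued in the rest of the paper requires the curvature and volume-growth hypotheses of Theorem~\ref{MT1} rather than this short cohomological argument.
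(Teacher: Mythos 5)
Your argument is correct and complete: it is the standard cohomological proof ($\partial\bar\partial$-lemma to get a Ricci potential $f$, tracing to see $\Delta_\omega f$ is constant, Stokes to force $\Delta_\omega f\equiv 0$, hence $f$ constant and $\operatorname{Ric}(\omega)=\lambda\omega$). The paper states this proposition as a well-known fact and gives no proof at all, so there is nothing to compare against; your write-up is precisely the argument being alluded to, and your closing remark about the double use of compactness correctly identifies why the non-compact setting of Theorem~\ref{MT1} requires different hypotheses.
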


Question~\ref{ques:main} asks for a non-compact analogue of this rigidity statement. In the complete non-compact setting, such a conclusion can no longer follow from compact cohomological arguments alone, so one must impose additional information. Accordingly, a natural strategy is to seek sufficient conditions formulated at infinity. Relative to the reference K\"ahler--Einstein metric \(\omega\), the pair \((F,\varphi)\) satisfies the coupled elliptic system \eqref{csckveq:vol}--\eqref{csckveq:F}. This makes asymptotic information particularly relevant: analytic behavior at infinity is precisely the type of input that can be translated into the interior through the elliptic maximum principle. From this perspective, it is natural to begin with asymptotic conditions as candidates for rigidity.

For this purpose, we distinguish two asymptotic behaviors for a cscK perturbation \(\omega_\varphi\):
\begin{enumerate}
    \item (\textit{Weak asymptotic condition}) the volume ratio satisfies
    \[
    \frac{\omega_\varphi^n}{\omega^n}\to 1
    \qquad \text{at infinity};
    \]
    \item (\textit{Strong asymptotic condition}) the metric perturbation satisfies
    \[
    \sqrt{-1}\partial\bar\partial\varphi\to 0
    \qquad \text{at infinity}.
    \]
\end{enumerate}
The strong condition clearly implies the weak one.

Since a complete Einstein manifold with positive Einstein constant is compact by Myers' theorem, the non-compact K\"ahler--Einstein setting reduces to two cases: the Calabi--Yau case and the negatively curved case. We begin with the Calabi--Yau case, where rigidity is particularly transparent: weak asymptotic control of the volume ratio already suffices.
\begin{proposition}\label{CY case}
Let $(M,\omega)$ be a complete Calabi-Yau manifold. Suppose $\tilde{\omega}$ is a complete cscK metric on $M$ with $R(\tilde{\omega})=0$, and that $\tilde{\omega}^{n}=e^{F}\omega^{n}$ for some $F\in C^{\infty}(M)$. If, for any point $p\in M$,
\[
  \lim_{r\to\infty}\|F\|_{C^{0}(M\setminus B(p,r))}=0,
\]
then $\tilde{\omega}$ is Calabi--Yau.
\end{proposition}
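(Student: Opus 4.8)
The plan is to show that the volume potential $F$ is harmonic with respect to $\tilde\omega$, and then to exploit its uniform decay at infinity to force $F\equiv 0$; the vanishing of $\operatorname{Ric}(\tilde\omega)$ will then be immediate.

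First I would rewrite the Ricci curvature of $\tilde\omega$ in terms of $F$ alone. Since $\tilde\omega^n=e^F\omega^n$ gives $\det\tilde g=e^F\det g$, the formula $R_{i\bar j}=-\partial_i\partial_{\bar j}\log\det g$ yields
\begin{equation*}
  \operatorname{Ric}(\tilde\omega)=\operatorname{Ric}(\omega)-\sqrt{-1}\,\partial\bar\partial F=-\sqrt{-1}\,\partial\bar\partial F,
\end{equation*}
where the Calabi-Yau hypothesis $\operatorname{Ric}(\omega)=0$ is used in the last equality. Tracing with respect to $\tilde\omega$ and invoking $R(\tilde\omega)=0$ then gives
\begin{equation*}
  0=R(\tilde\omega)=\operatorname{tr}_{\tilde\omega}\operatorname{Ric}(\tilde\omega)=-\Delta_{\tilde\omega}F,
\end{equation*}
so $F$ is $\tilde\omega$-harmonic on $M$. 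I would emphasize that this step uses only the volume-form relation $\tilde\omega^n=e^F\omega^n$ and not any cohomological relation between $\tilde\omega$ and $\omega$, which is precisely why the cohomology hypothesis can be dropped here.

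Next I would run a maximum-principle argument driven by the decay hypothesis. The assumption $\lim_{r\to\infty}\|F\|_{C^0(M\setminus B(p,r))}=0$ says that $F\to 0$ uniformly at infinity. Suppose $s:=\sup_M F>0$; choosing $R$ so large that $|F|<s/2$ on $M\setminus B(p,R)$, the weak maximum principle for $\tilde\omega$-harmonic functions on the relatively compact domain $B(p,R)$ gives $\max_{\overline{B(p,R)}}F=\max_{\partial B(p,R)}F\le s/2$, whence $\sup_M F\le s/2<s$, a contradiction. Thus $\sup_M F\le 0$, and applying the same argument to $-F$ gives $\inf_M F\ge 0$; hence $F\equiv 0$. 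Consequently $\operatorname{Ric}(\tilde\omega)=-\sqrt{-1}\,\partial\bar\partial F=0$, that is, $\tilde\omega$ is Calabi-Yau.

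The identity making $F$ harmonic is the crux and essentially trivializes the rest, so I do not expect a serious analytic obstacle; this is exactly why the Calabi-Yau case is cleaner than Theorem~\ref{MT1}. The only point requiring care is the passage to a noncompact maximum principle: in the negative-curvature setting of Theorem~\ref{MT1} one must invoke Omori-Yau together with spectral or parabolicity hypotheses, whereas here the uniform decay of $F$ confines its extrema to a relatively compact region, so only the classical weak maximum principle for the uniformly elliptic operator $\Delta_{\tilde\omega}$ on bounded smooth domains is needed, with no curvature bounds imposed on $\tilde\omega$.
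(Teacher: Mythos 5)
Your proof is correct and follows essentially the same route as the paper: derive $\operatorname{Ric}(\tilde\omega)=-\sqrt{-1}\,\partial\bar\partial F$ from the volume-form relation, trace to get $\Delta_{\tilde\omega}F=0$, and then use the uniform decay of $F$ at infinity together with the maximum principle to conclude $F\equiv 0$. The only difference is that you spell out the maximum-principle step (localizing to a large ball where $|F|<s/2$ outside), which the paper leaves implicit.
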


\begin{proof}
Since $\operatorname{Ric}(\tilde{\omega})=\operatorname{Ric}(\omega)-\sqrt{-1}\,\partial\bar\partial F$ and $\operatorname{Ric}(\omega)=0$, tracing with respect to $\tilde{\omega}$ gives
\[
  0=R(\tilde{\omega})=\operatorname{tr}_{\tilde{\omega}}\operatorname{Ric}(\tilde{\omega})
   = -\Delta_{\tilde{\omega}}F,
\]
which implies $F$ is harmonic. As $F\to 0$ at infinity and $\tilde{\omega}$ is complete, the maximum principle yields $F\equiv 0$, hence $\tilde{\omega}^{n}=\omega^{n}$ and $\operatorname{Ric}(\tilde{\omega})=\operatorname{Ric}(\omega)=0$.
\end{proof}

We now turn to the negatively curved case. As a first rigidity result in this direction, we impose the stronger asymptotic condition, which is, as expected, to force \(F-\varphi\) to be harmonic and vanish at infinity.
\begin{proposition}\label{cscK is KE 2}
Let \((M,\omega)\) be a complete K\"ahler--Einstein manifold with negative scalar curvature. Assume that the sectional curvature of \(\omega\) is bounded from below. Suppose there exists \(\varphi\in C^\infty(M)\) such that \(\omega_\varphi=\omega+\sqrt{-1}\partial\bar\partial\varphi\) defines a complete cscK metric with \(R(\omega_\varphi)=R(\omega)\). If, for every \(p\in M\),
\[
\lim_{r\to\infty}\|\varphi\|_{C^2(M\setminus B(p,r))}=0,
\]
then \(\varphi\equiv 0\) and \(\omega_\varphi=\omega\).
\end{proposition}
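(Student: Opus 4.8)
The plan is to distill the two cscK equations into a single $\omega_\varphi$-harmonic function and then apply the maximum principle ``at infinity''. Writing $\sqrt{-1}\,\partial\bar\partial\varphi=\omega_\varphi-\omega$ and tracing with respect to $\omega_\varphi$ gives the elementary identity
\[
\Delta_{\omega_\varphi}\varphi=\operatorname{tr}_{\omega_\varphi}\big(\sqrt{-1}\,\partial\bar\partial\varphi\big)=\operatorname{tr}_{\omega_\varphi}\omega_\varphi-\operatorname{tr}_{\omega_\varphi}\omega=n-\operatorname{tr}_{\omega_\varphi}\omega,
\]
whose right-hand side is precisely $\Delta_{\omega_\varphi}F$ by \eqref{csckveq:F}. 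Hence $H:=\varphi-F$ satisfies $\Delta_{\omega_\varphi}H=0$; that is, $H$ is $\omega_\varphi$-harmonic on $M$. This identity is the one genuinely new input, and the rest of the argument is extracted from it by the strong maximum principle.

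Next I would control $H$ at infinity. The hypothesis $\|\varphi\|_{C^2(M\setminus B(p,r))}\to0$ gives both $\varphi\to0$ and $\sqrt{-1}\,\partial\bar\partial\varphi\to0$ there; since $\omega$ has $C^0$-bounded geometry, the latter forces the eigenvalues of $\omega_\varphi$ relative to $\omega$ to tend to $1$, so the volume ratio $\omega_\varphi^n/\omega^n\to1$ and thus $F=\log(\omega_\varphi^n/\omega^n)\to0$ at infinity by \eqref{csckveq:vol}. Therefore $H=\varphi-F\to0$ at infinity. A harmonic function with this property must vanish identically: if $\sup_M H>0$ then $H<\tfrac12\sup_M H$ outside a large ball, so the supremum is attained at an interior point, and the strong maximum principle forces $H\equiv\sup_M H>0$, contradicting $H\to0$; applying this to $-H$ as well gives $H\equiv0$. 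Consequently $F=\varphi$ on $M$, and the normalization $\sup_M\varphi=0$ yields $F=\varphi\le0$, i.e. $\omega_\varphi^n\le\omega^n$.

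Finally I would upgrade $F\le0$ to $F\equiv0$. Because $\varphi=F\le0$, the arithmetic--geometric mean inequality gives
\[
\operatorname{tr}_{\omega_\varphi}\omega\ \ge\ n\Big(\frac{\omega^n}{\omega_\varphi^n}\Big)^{1/n}=n\,e^{-F/n}\ \ge\ n,
\]
so $\Delta_{\omega_\varphi}\varphi=n-\operatorname{tr}_{\omega_\varphi}\omega\le0$; thus $\varphi$ is $\omega_\varphi$-superharmonic. Since $\varphi\le0$, $\varphi\to0$ at infinity, and $\varphi$ is superharmonic, the infimum of $\varphi$ is attained at an interior point, and the strong minimum principle forces $\varphi\equiv0$. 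Hence $\sqrt{-1}\,\partial\bar\partial\varphi=0$ and $\omega_\varphi=\omega$, as claimed. Alternatively, $F\ge0$ is the content of Lemma~\ref{vol_compa}, provided one first checks that $\operatorname{Ric}(\omega_\varphi)$ is bounded below and $\omega^n/\omega_\varphi^n$ is bounded above; the superharmonicity route above avoids these and uses only the $C^2$-decay.

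The step I expect to require the most care is the passage from ``$H\to0$ at infinity'' to ``$H$ attains its supremum in the interior,'' where the decay hypothesis is used to reduce the global statement to Hopf's local strong maximum principle (so that the Omori--Yau machinery is not even needed here); by contrast, the algebraic observation that $\varphi-F$ is $\omega_\varphi$-harmonic is short but is the crux that makes the whole argument work.
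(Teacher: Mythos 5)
Your first two steps coincide with the paper's: tracing \eqref{csckveq:F} against $\Delta_{\omega_\varphi}\varphi=n-\operatorname{tr}_{\omega_\varphi}\omega$ gives that $\varphi-F$ is $\omega_\varphi$-harmonic, the $C^2$-decay of $\varphi$ forces $F\to0$ at infinity, and a maximum principle then yields $F\equiv\varphi$, hence $(\omega+\sqrt{-1}\,\partial\bar\partial\varphi)^n=e^{\varphi}\omega^n$. The one genuine gap is in how you extract $\varphi\equiv0$ from this equation. You obtain $\varphi\le0$ by invoking the standing normalization $\sup_M\varphi=0$, but that normalization is not available in this proposition: the hypothesis $\|\varphi\|_{C^2(M\setminus B(p,r))}\to0$ already fixes the additive constant (it forces $\varphi\to0$ at infinity), and adding a constant to arrange $\sup_M\varphi=0$ would in general destroy the decay. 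So the inequality $\varphi\le0$, on which your entire superharmonicity/minimum-principle step rests, is unjustified as written; your alternative route through Lemma~\ref{vol_compa} would give the opposite sign $F\ge0$ anyway, and you do not verify its hypotheses.

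The gap is easily repaired, and the repair is essentially what the paper does: it applies the Omori--Yau maximum principle (legitimate since $\omega$ has $C^0$-bounded geometry and $\varphi$ is bounded) to a near-supremum sequence for $\varphi$, where $\sqrt{-1}\,\partial\bar\partial\varphi\lesssim0$ forces $e^{\sup_M\varphi}\le1$, and symmetrically $e^{\inf_M\varphi}\ge1$, whence $\varphi\equiv0$. In your setting you can avoid Omori--Yau entirely by the same localization trick you already used for $H=\varphi-F$: since $\varphi\to0$ at infinity, if $\sup_M\varphi>0$ it is attained at an interior point where $\sqrt{-1}\,\partial\bar\partial\varphi\le0$, so $e^{\varphi}\omega^n=(\omega+\sqrt{-1}\,\partial\bar\partial\varphi)^n\le\omega^n$ there, a contradiction; likewise $\inf_M\varphi\ge0$. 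Either fix completes the argument; what you cannot do is prescribe the sign of $\varphi$ a priori.
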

\begin{proof}
It follows from (\ref{csckveq:F}),
\begin{align*}
        \Delta_{\omega_\varphi}F  = \Delta_{\omega_\varphi} \varphi.
\end{align*}
Since $\lim_{r\rightarrow\infty} \|\varphi\|_{C^2\left(M\setminus B (p,r)\right)}=0$, we also have 
$$
\lim_{r\rightarrow\infty} \|F\|_{C^0\left(M\setminus B (p,r)\right)}=0,
$$ 
thanks to (\ref{csck eq:vol}). By the maximum principle, we have \(F = \varphi\) on \(M\). Substituting this into (\ref{csck eq:vol}) yields  
\begin{equation}\label{eq:det-relation}
       (\omega + \sqrt{-1}\partial \bar{\partial}\varphi)^n = e^{\varphi} \omega^n.
\end{equation}  
Since \(\varphi\to 0\) at infinity, the function \(\varphi\) is bounded on \(M\). By the generalized maximum principle, there exists a sequence \(\{z_k\} \subset \Omega\) such that  
\[
1 = \lim_{k \to \infty} \frac{\omega^n}{\omega^n}(z_k) \geq \lim_{k \to \infty} \frac{(\omega + \sqrt{-1}\partial \bar{\partial}\varphi)^n}{\omega^n}(z_k) = e^{\sup_{M} \varphi}.
\]  
This implies \(\sup_{\Omega} \varphi \leq 0\). A symmetric argument gives \(\inf_{M} \varphi \geq 0\), forcing \(\varphi \equiv 0\) on \(M\). Consequently, \(\omega_\varphi \) is K\"ahler-Einstein, which completes the proof. 
\end{proof}

\subsection{Parabolicity and rigidity of complete K\"ahler-Einstein metrics}

In particular, Proposition~\ref{cscK is KE 2} shows that if a K\"ahler--Einstein metric and a cscK metric are asymptotically equivalent in the strong sense, then they must coincide. However, the condition
\[
\sqrt{-1}\partial\bar\partial\varphi\to 0
\]
already yields \(C^0\)-decay of \(\varphi\) and \(F\), and hence provides explicit $C^0$-control at infinity. From the analytic point of view, it is therefore natural to ask whether this asymptotic \(C^0\)-information of $F-\varphi$ can be weakened to a more flexible integral condition, preferably of \(L^2\)-type. Once we move from pointwise decay to global integral control, the distinction between parabolic and non-parabolic manifolds becomes essential.
\begin{definition}[Parabolicity]
A complete Riemannian manifold \((M,g)\) is called \emph{non-parabolic} if it admits a positive minimal Green's function. Otherwise, it is called \emph{parabolic}.
\end{definition}

A basic consequence of parabolicity is that every subharmonic function bounded from above must be constant (see, for instance, \cite{grigor1999analytic}). In our setting, this applies directly to the perturbation potential \(\varphi\) when the background K\"ahler-Einstein manifold is parabolic. It follows from Lemma \ref{lem:vol_compa}, 
\begin{align}\label{subharmonic-phi}
    \Delta_\omega \varphi  = \tr_\omega \omega_{\varphi} - n \geq n\left(\omega_\varphi^n/\omega^n\right)^{1/n} - n \geq 0.
\end{align}
Hence we obtain the following rigidity statement in the parabolic case.
\begin{proposition}\label{cscK is KE:para}
Let $(M^n,\omega)$ be a complete K\"ahler--Einstein manifold with negative scalar curvature. Suppose there exists \(\varphi\in C^\infty(M)\) with \(\sup_M\varphi<\infty\) such that  \(\omega_\varphi=\omega+\sqrt{-1}\partial\bar\partial\varphi\) defines a complete cscK metric with
\(R(\omega_\varphi)=R(\omega)\). Assume that \(\ric(\omega_\varphi)\) is bounded from below and the log volume ratio \(F\) is bounded from below. If \((M,\omega)\) is parabolic, then
\[
\varphi\equiv 0.
\]
In particular, \(\omega_\varphi\) is K\"ahler--Einstein.
\end{proposition}
\begin{remark}[Quadratic volume growth implies parabolicity]
A standard volume-growth criterion asserts that if
\[
\int^\infty \frac{r}{\operatorname{Vol} B(p,r)}\,dr=\infty,
\]
then \(M\) is parabolic. In particular, if \(\operatorname{Vol} B(p,r)\le C r^2\) for all sufficiently large \(r\), then the integral diverges and \(M\) is parabolic; see also \cite{cheng1975differential}.
\end{remark}

We now turn to the non-parabolic case. For the rigidity argument below, it is convenient to work under the stronger quantitative assumption that the bottom of the $L^2$-spectrum of the Laplacian 
\[
\lambda_1(\Delta_\omega):= \inf_{0\neq f \in W^{1,2}_0(M)} \frac{\int_M |\nabla f|_\omega^2 ~\omega^n}{\int_M |f|^2~\omega^n}>0,
\]
which in particular implies that \((M,\omega)\) is non-parabolic. Moreover, under the additional assumption \(\|\varphi\|_{C^2(M,\omega)}<\infty\), the perturbation metric \(\omega_\varphi\) is uniformly equivalent to the reference metric \(\omega\).

\begin{theorem}\label{thm:nonparabolic}
Let $(M^n,\omega)$ be a complete negatively curved K\"ahler-Einstein manifold so that \(\lambda_1(\Delta_\omega)>0\). Suppose there exists \(\varphi\in C^\infty(M)\) with \(\|\varphi\|_{C^2(M,\omega)}<\infty\) such that \(\omega_\varphi=\omega+\sqrt{-1}\partial\bar\partial\varphi\) defines a complete cscK metric with \(R(\omega_\varphi)=R(\omega)\), whose Ricci curvature is bounded from below and the log volume ratio \(F\) is bounded from below. If, for a fixed point $p \in M$, there exist constants $C_0>0$ and $0<\delta<\sqrt{\lambda_1(\Delta_{\omega_\varphi})}$ such that, for $R\ge 1$,
\begin{equation}\label{eq:L2-growth}
    \int_{B_{\omega_\varphi}(p,2R)\setminus B_{\omega_\varphi}(p,R)}|F-\varphi|^2\,\omega_\varphi^n \;\le\; C_0\,e^{2\delta R},
\end{equation}
then
\[
\omega_\varphi=\omega.
\]
In particular, \(\omega_\varphi\) is K\"ahler--Einstein.
\end{theorem}
\begin{proof}
Write
\[
u:=F-\varphi,\qquad g:=\omega_\varphi,\qquad dV_g:=\omega_\varphi^n.
\]
By Lemma~\ref{lem:vol_compa} and the \(C^2\)-bound on \(\varphi\), the metrics \(\omega\) and \(\omega_\varphi\) are bi-Lipschitz equivalent. In particular,
\[
\lambda_1(\Delta_g)>0.
\]
Recall that
\[
\Delta_g u=\Delta_g(F-\varphi)=0.
\]
Fix a number \(\alpha\in\bigl(\delta,\sqrt{\lambda_1(\Delta_g)}\bigr)\), and let \(\rho(x):=d_g(p,x)\) be the distance function. Choose a nonincreasing function $\chi\in C^\infty([0,\infty))$ such that
\[
0\le \chi\le 1,\qquad
\chi\equiv 1 \ \text{on } [0,1],\qquad
\chi\equiv 0 \ \text{on } [2,\infty),\qquad
|\chi'|\le C.
\]
For $R\ge 1$, define
\[
\chi_R(x):=\chi\!\left(\frac{\rho(x)}{R}\right),
\qquad
\eta_R(x):=\chi_R(x)e^{-\alpha \rho(x)}.
\]
Then $\eta_R\in W^{1,\infty}_c(M)$, $\operatorname{supp} \eta_R\subset B_g(p,2R)$, and
\[
\eta_R=e^{-\alpha\rho}\quad\text{on } B_g(p,R).
\]
In particular,
\[
\eta_Ru,\ \eta_R^2u\in W^{1,2}_0(M).
\]

Since $u$ is $g$-harmonic, its weak formulation gives
\[
\int_M \langle \nabla u,\nabla \psi\rangle_g\,dV_g=0
\qquad\forall\,\psi\in W^{1,2}_0(M).
\]
Taking $\psi=\eta_R^2u$, we obtain
\[
0=\int_M \eta_R^2 |\nabla u|_g^2\,dV_g
+2\int_M \eta_R u\langle \nabla u,\nabla\eta_R\rangle_g\,dV_g.
\]
On the other hand,
\[
|\nabla(\eta_R u)|_g^2
=
\eta_R^2|\nabla u|_g^2
+u^2|\nabla\eta_R|_g^2
+2\eta_R u\langle \nabla u,\nabla\eta_R\rangle_g.
\]
Combining the last two identities yields
\[
\int_M |\nabla(\eta_R u)|_g^2\,dV_g
=
\int_M u^2|\nabla\eta_R|_g^2\,dV_g.
\]
By the variational characterization of $\lambda_1(\Delta_g)$,
\[
\lambda_1(\Delta_g)\int_M (\eta_R u)^2\,dV_g
\le
\int_M |\nabla(\eta_R u)|_g^2\,dV_g,
\]
hence
\[
\lambda_1(\Delta_g)\int_M \eta_R^2 u^2\,dV_g
\le
\int_M u^2|\nabla\eta_R|_g^2\,dV_g.
\]
Since
\[
\nabla\eta_R
=
e^{-\alpha\rho}\nabla\chi_R-\alpha \chi_R e^{-\alpha\rho}\nabla\rho,
\]
and $|\nabla\rho|_g=1$ a.e. Therefore, for any $\varepsilon>0$,
\[
|\nabla\eta_R|_g^2
\le
(1+\varepsilon)\alpha^2\eta_R^2
+
C_\varepsilon e^{-2\alpha\rho}|\nabla\chi_R|_g^2.
\]
Since $|\nabla\chi_R|_g\le C/R$ and $\nabla\chi_R$ is supported in
\[
A_R:=B_g(p,2R)\setminus B_g(p,R),
\]
we get
\[
|\nabla\eta_R|_g^2
\le
(1+\varepsilon)\alpha^2\eta_R^2
+
\frac{C_\varepsilon}{R^2}e^{-2\alpha\rho}\mathbf 1_{A_R}.
\]
Thus
\[
\Bigl(\lambda_1(\Delta_g)-(1+\varepsilon)\alpha^2\Bigr)
\int_M \eta_R^2 u^2\,dV_g
\le
\frac{C_\varepsilon}{R^2}
\int_{A_R} e^{-2\alpha\rho}u^2\,dV_g.
\]
Choose $\varepsilon>0$ so small that
\[
\lambda_1(\Delta_g)-(1+\varepsilon)\alpha^2>0.
\]
On $A_R$ we have $\rho\ge R$, hence
\[
\int_{A_R} e^{-2\alpha\rho}u^2\,dV_g
\le
e^{-2\alpha R}\int_{A_R}u^2\,dV_g
\le
C_0 e^{-2(\alpha-\delta)R}.
\]
Therefore
\[
\Bigl(\lambda_1(\Delta_g)-(1+\varepsilon)\alpha^2\Bigr)
\int_M \eta_R^2 u^2\,dV_g
\le
\frac{C}{R^2}e^{-2(\alpha-\delta)R}\xrightarrow[R\to\infty]{}0.
\]
Since \(\chi\) is nonincreasing, for each fixed \(x\in M\) we have
\[
\eta_R(x)^2u(x)^2 \uparrow e^{-2\alpha\rho(x)}u(x)^2
\qquad\text{as }R\to\infty.
\]
By the monotone convergence theorem,
\[
\int_M e^{-2\alpha\rho}u^2\,dV_g
=
\lim_{R\to\infty}\int_M \eta_R^2u^2\,dV_g
=0.
\]
Hence $u\equiv 0$ on $M$.

Now Lemma~\ref{lem:vol_compa} gives $F\ge 0$, while the normalization $\sup_M\varphi=0$ implies $\varphi\le 0$. Since $u=F-\varphi\equiv 0$, we have $F=\varphi$, and therefore
\[
0\le F=\varphi\le 0.
\]
Thus
\[
F\equiv \varphi\equiv 0.
\]
In particular,
\[
\omega_\varphi^n=\omega^n.
\]
Applying the equality case in Lemma~\ref{lem:vol_compa}, we conclude that
\[
\omega_\varphi=\omega.
\]
This completes the proof.
\end{proof}
\section{The cscK metric on bounded strictly pseudoconvex domains}

In this section, we study cscK metrics on bounded strictly pseudoconvex domains in \(\CC^n\). We will also briefly recall some general facts about the Bergman metric on bounded pseudoconvex domains. Note that bounded pseudoconvex domains admit complete K\"ahler--Einstein metrics of negative scalar curvature, see \cite{mokyau}. 
\subsection{The Cheng-Yau metric}
Let $\Omega \subset \CC^n$ be a $C^k$ bounded strictly pseudoconvex domain. In \cite{Cheng1980OnTE}, Cheng and Yau investigated complete Kähler metrics of the form  
\[
\omega_{\rho} = -\sqrt{-1}\partial \bar{\partial}\log\rho,
\]  
where $\rho$ is a strictly plurisubharmonic defining function for the domain $\Omega = \{\rho > 0\}$. They derived the local expression for the curvature tensor:  
\[
R_{i\bar{j}k\bar{l}} = -\left(g_{i\bar{j}}g_{k\bar{l}} + g_{i\bar{l}}g_{k\bar{j}}\right) + O\left(|\rho|^{-1}\right).
\]  
In particular, the metric \(\omega_\rho\) behaves asymptotically like a K\"ahler--Einstein metric with Einstein constant \(-(n+1)\) near the boundary \(\partial\Omega\).

The Kähler-Einstein metric on a strictly pseudoconvex domain is constructed using the Fefferman defining function $\rho$ of class $C^k$ for $k \geq 7$ (see \cite{Cheng1980OnTE}). Let $\omega$ denote this metric, defined by  
\[
\omega = -\sqrt{-1} \, \partial \bar{\partial} \log \rho.
\]  
If the cscK metric is also defined by a global strictly plurisubharmonic defining function $\tilde{\rho}$, then there exists a positive function $u \in C^{k-1}(\bar{\Omega})$ satisfying $u = 1 + o(1)$ near $\partial \Omega$ such that $\tilde{\rho} = u \rho$. Consequently, the perturbed metric satisfies  
\[
\tilde{\omega} = \omega + \sqrt{-1}\partial \bar{\partial} \varphi,
\]  
where $\varphi = -\log u$. For a detailed discussion of the regularity of $u$, see \cite[Chapter 3.1]{krantz2001function}.   
\begin{proposition}\label{no cscK by def}
    Let $\Omega \subset \mathbb{C}^n$ be a bounded strictly pseudoconvex domain with $C^k$-boundary, for $k\geq 8$. Then there is no complete cscK metric $\tilde{\omega}$ given by a $C^k$-defining function $\tilde{\rho}$ unless $\tilde{\omega}$ is K\"ahler-Einstein.
\end{proposition}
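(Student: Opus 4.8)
The plan is to reduce Proposition~\ref{no cscK by def} to the asymptotic rigidity already established in Section~4. The key structural input is that both the Cheng--Yau K\"ahler--Einstein metric $\omega=-\sqrt{-1}\,\partial\bar\partial\log\rho$ and the putative cscK metric $\tilde\omega=-\sqrt{-1}\,\partial\bar\partial\log\tilde\rho$ arise from global strictly plurisubharmonic defining functions, with $\tilde\rho=u\rho$ for a positive $u\in C^{k-1}(\bar\Omega)$ satisfying $u=1+o(1)$ near $\partial\Omega$. Writing $\varphi=-\log u$, we have $\tilde\omega=\omega+\sqrt{-1}\,\partial\bar\partial\varphi$ and $\varphi\to 0$ at the boundary. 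The strategy is then to upgrade this $C^0$ boundary decay of $\varphi$ to the $C^2$ decay hypothesis of Proposition~\ref{cscK is KE 2}, after which that proposition forces $\varphi\equiv 0$ and hence $\tilde\omega=\omega$.

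First I would make precise the sense in which ``decay at infinity'' is measured: the intrinsic distance to a fixed interior point tends to infinity exactly as one approaches $\partial\Omega$, so the condition $\lim_{r\to\infty}\|\varphi\|_{C^2(M\setminus B(p,r))}=0$ translates into $\varphi$ together with its first and second (covariant) derivatives tending to $0$ as $x\to\partial\Omega$. Here the crucial point is that derivatives must be measured with respect to the complete metric $\omega$, not the Euclidean one; because $\omega$ blows up near the boundary, a fixed Euclidean-$C^2$ bound on $\varphi$ already corresponds to vanishing of the $\omega$-covariant derivatives. I would exploit the Cheng--Yau asymptotics $R_{i\bar j k\bar l}=-(g_{i\bar j}g_{k\bar l}+g_{i\bar l}g_{k\bar j})+O(|\rho|^{-1})$, which show $(\Omega,\omega)$ has $C^0$-bounded geometry (indeed asymptotically constant holomorphic sectional curvature), so that the generalized maximum principle and all hypotheses of Proposition~\ref{cscK is KE 2} are available.

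The technical heart is the regularity bookkeeping near $\partial\Omega$. Since $u\in C^{k-1}(\bar\Omega)$ with $u=1+o(1)$ and $\rho$ is the Fefferman defining function of class $C^k$ ($k\ge 8$), $\varphi=-\log u$ extends to a function on $\bar\Omega$ that is $C^{k-1}$ up to the boundary and vanishes on $\partial\Omega$. The plan is to estimate the $\omega$-norms $|\nabla\varphi|_\omega$ and $|\sqrt{-1}\,\partial\bar\partial\varphi|_\omega$: each contraction against the inverse metric $g^{i\bar j}=O(\rho)$ produces positive powers of $\rho$ (up to the logarithmic factors inherent in $\omega_\rho$), so the Euclidean boundedness of the derivatives of $\varphi$ up to order two, combined with $\rho\to0$, yields $|\nabla\varphi|_\omega\to0$ and $|\sqrt{-1}\,\partial\bar\partial\varphi|_\omega\to 0$ at $\partial\Omega$. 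This is precisely the $C^2$-decay required. One must be careful that the metric coefficients of $\omega_\rho$ contain factors of $\rho^{-1}$ and $\rho^{-2}$, so I would organize the computation in boundary-adapted coordinates (a defining-function direction plus tangential directions) to track the exact powers.

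The main obstacle I anticipate is this last weighting argument: verifying uniformly up to $\partial\Omega$ that the $\omega$-covariant Hessian of $\varphi$ decays, rather than merely staying bounded. The danger is that second derivatives of $\varphi$ in the normal direction could be amplified by the $\rho^{-2}$ growth of $g_{i\bar j}$ before being damped by the $g^{i\bar j}g^{k\bar l}=O(\rho^2)$ factors in $|\cdot|_\omega^2$; one needs the cancellation to come out strictly in favor of decay. I would resolve this by using that $\varphi$ vanishes to first order at the boundary (since $u-1=o(1)$ and, with the $C^{k-1}$ regularity, one can extract genuine Taylor vanishing), so that normal derivatives of $\varphi$ are themselves small near $\partial\Omega$; sharpening $u=1+o(1)$ to control $\nabla u$ and $\nabla^2 u$ via the cited regularity theory for the Monge--Amp\`ere/Fefferman construction is what makes the weighted estimate close. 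Once the $C^2$-decay is in hand, Proposition~\ref{cscK is KE 2} applies verbatim and gives $\varphi\equiv0$, so $\tilde\omega=\omega$ is K\"ahler--Einstein, completing the proof.
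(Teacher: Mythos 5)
Your proposal follows essentially the same route as the paper: write $\tilde\rho=u\rho$ with $u\in C^{k-1}(\bar\Omega)$, $u=1+o(1)$, set $\varphi=-\log u$, and use the degeneration $g^{i\bar l}=O(|\rho|)$ of the inverse Cheng--Yau metric against the Euclidean boundedness of the derivatives of $\varphi$ to reduce to the boundary-decay rigidity of Proposition~\ref{cscK is KE 2}. The paper takes a slightly shorter path at one point: rather than establishing full $\omega$-measured $C^2$-decay of $\varphi$, it extracts only what the proof of Proposition~\ref{cscK is KE 2} actually uses, namely that $e^{F}=\det\bigl(\delta_{i\bar j}+g^{i\bar l}\partial_l\partial_{\bar j}\varphi\bigr)\to 1$ and $\varphi\to 0$ at $\partial\Omega$ (indeed it remarks explicitly that the Euclidean $\partial\bar\partial\varphi$ need not decay).

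One part of your argument is off, though harmlessly so. The ``danger'' you identify --- normal second derivatives of $\varphi$ being amplified by the $\rho^{-2}$ growth of $g_{i\bar j}$ --- does not arise: the relevant quantities, $g^{i\bar l}\partial_l\partial_{\bar j}\varphi$ and $|\partial\bar\partial\varphi|^2_\omega=g^{i\bar j}g^{k\bar l}\varphi_{i\bar l}\varphi_{k\bar j}$, involve only inverse-metric factors, each eigenvalue of which is $O(|\rho|)$, so boundedness of the Euclidean Hessian of $\varphi$ already forces decay. Correspondingly, your proposed fix --- that $\varphi$ vanishes to first order at $\partial\Omega$ --- is not justified by $u=1+o(1)$ and $u\in C^{k-1}(\bar\Omega)$ alone (the normal derivative of $u$ on $\partial\Omega$ need not vanish), but it is also not needed. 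With that detour removed, your argument closes and coincides with the paper's.
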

\begin{proof}
    Assume there exists a complete cscK metric $\tilde{\omega}$ defined by a defining function \(\tilde{\rho}\). Let $\omega$ be the K\"ahler-Einstein metric on $\Omega$ given by the defining function $\rho$. As discussed previously, this implies the metric perturbation  
\[
\tilde{\omega} = \omega + \sqrt{-1}\partial \bar{\partial} \varphi,
\]
    where $\varphi = -\log u$ for some $u \in C^{k-1}(\bar{\Omega})$ such that $\tilde{\rho}=u\rho$. The method is similar to the proof of Proposition \ref{cscK is KE 2}, and the only difference is that $\partial \bar{\partial}\varphi \not\rightarrow 0$. However, note that the determinant relationship  
    \[
    \det\left(g_{i\bar{j}} + \partial_i \partial_{\bar{j}}\varphi\right) = \det\left(g_{i\bar{j}}\right) \cdot \det\left(\delta_{i\bar{j}} + g^{i\bar{l}}\partial_l \partial_{\bar{j}}\varphi\right) := e^F \det\left(g_{i\bar{j}}\right)
    \]  
    holds. Moreover, we have $g^{i\bar{l}}=O(|\rho|)$ and \(\partial_l \partial_{\bar{j}}\varphi \in C^{k-3}(\bar{\Omega})\), which implies that \(g^{i\bar{l}}\partial_l \partial_{\bar{j}}\varphi\) vanishes asymptotically near \(\partial \Omega\). Thus,  
    \[
    e^F = \det\left(\delta_{i\bar{j}} + g^{i\bar{l}}\partial_l \partial_{\bar{j}}\varphi\right) \to \det\left(\delta_{i\bar{j}}\right) = 1 \quad \text{as} \quad z \to \partial \Omega.
    \]
    Combining with $\varphi =0 $ on $\partial \Omega$, we complete the proof.
\end{proof}
\subsection{The Bergman metric}
Besides the metrics \(\omega_\rho\), the Bergman metric \(\omega_B\) provides another important complete K\"ahler metric on bounded pseudoconvex domains with \(C^2\)-boundary. Let $\Omega$ be a bounded pseudoconvex domain in $\mathbb{C}^n$ and let $A^2(\Omega)$ be the space of holomorphic functions in $L^2(\Omega)$. It is clear that $A^2(\Omega)$ is a Hilbert space. The Bergman kernel $K(z)$ on $\Omega$ is a real analytic function defined as
\begin{align*}
    K(z)= \sum^\infty_{j=1} |\varphi_j(z) |^2, \qquad \forall z \in \Omega,
\end{align*}
where $\{\varphi_j\}^\infty_{j=1}$ is an orthonormal basis of $A^2(\Omega)$ with respect to the $L^2$ inner product. Since the Bergman kernel is positive and independent of the choice of any orthonormal basis \cite{krantz2001function} on bounded domains, we then can define the Bergman metric by
\begin{align*}
    \omega_B:= \sqrt{-1}\pa\dpa\log K.
\end{align*}

The Bergman metric is a complete real analytic K\"ahler metric, with the real analytic property inherited from the Bergman kernel.

A key observation is that, on a bounded pseudoconvex domain $\Omega$ with $C^2$ boundary, the Bergman metric $\omega_B$ behaves asymptotically like a Kähler–Einstein metric near a strictly pseudoconvex boundary point.
\begin{proposition}[Krantz-Yu\cite{krantz1996bergman}]
    Let $\Omega \subset \mathbb{C}^n$ be a bounded pseudoconvex domain and let $p \in \partial \Omega$ be a $C^2$ strictly pseudoconvex point. Then
    \begin{align*}
        \lim_{z\rightarrow p}|\operatorname{Ric}(\omega_B) +\omega_B|(z)=0,\quad \text{and}\quad\lim_{z\rightarrow p}R(\omega_B)=-n.
    \end{align*}
\end{proposition}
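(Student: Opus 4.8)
The statement is local: both strict pseudoconvexity at $p$ and the claimed curvature asymptotics are governed entirely by the boundary geometry in an arbitrarily small neighborhood of $p$. The plan is therefore to reduce the computation of $\operatorname{Ric}(\omega_B)$ and $R(\omega_B)$ near $p$ to the corresponding quantities for an explicit model domain whose Bergman metric is already K\"ahler--Einstein, and then to control the error. The natural model is the unit ball $\mathbb{B}^n$: its Bergman kernel is $K_{\mathbb{B}^n}(z)=c_n(1-|z|^2)^{-(n+1)}$, so $\omega_B=(n+1)\,\sqrt{-1}\partial\bar\partial\bigl(-\log(1-|z|^2)\bigr)$, which satisfies $\operatorname{Ric}(\omega_B)=-\omega_B$ and hence $R(\omega_B)=-n$ exactly. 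The target asymptotics are precisely the assertion that, near a strictly pseudoconvex point, the Bergman metric osculates this ball model to the order needed to compute curvature.

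First I would localize the Bergman kernel. Using H\"ormander's $L^2$ estimates for $\bar\partial$ together with the $C^2$ strict pseudoconvexity at $p$, one shows that $K_\Omega$ and the quantities built from it agree, up to errors that decay as $z\to p$, with the Bergman kernel of a local domain that osculates $\partial\Omega$ to second order at $p$. Strict pseudoconvexity guarantees that, after an affine biholomorphic change of coordinates straightening and convexifying the boundary to second order, this osculating model may be taken to be a ball (equivalently, the Siegel half-space). The key technical output here is not just localization of $K$ itself but uniform control of its derivatives up to fourth order, since the curvature tensor of $\omega_B=\sqrt{-1}\partial\bar\partial\log K$ involves four derivatives of $\log K$.

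Next I would run a scaling argument. Introducing anisotropic (parabolic) dilations adapted to the strictly pseudoconvex structure---contracting the complex-normal direction quadratically relative to the complex-tangential directions---rescales a boundary approach $z\to p$ into a fixed picture in which the rescaled domains converge to the ball model. Since the Bergman kernel transforms by the Jacobian factor under biholomorphisms while the metric, Ricci form, and scalar curvature transform tensorially, the curvature of $\omega_B$ evaluated at $z$ converges, as $z\to p$, to the curvature of the ball's Bergman metric. Feeding in the exact values $\operatorname{Ric}=-\omega_B$ and $R=-n$ for the ball then yields $|\operatorname{Ric}(\omega_B)+\omega_B|(z)\to 0$ and $R(\omega_B)(z)\to -n$.

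The main obstacle is the regularity bookkeeping in the localization. Localizing $K$ itself is comparatively standard via the $\bar\partial$-machinery, but the curvature requires \emph{uniform $C^2$ convergence of the rescaled metrics}, i.e.\ control of derivatives of $\log K$ up to order four, and this must be secured under only $C^2$ boundary regularity, where Fefferman's full smooth asymptotic expansion of the kernel is unavailable. The delicate point is to combine interior elliptic/subelliptic estimates for the kernel with the anisotropic scaling so that the osculation error decays strictly faster than the scaling amplifies derivatives; ensuring this separation of rates is what forces the careful quantitative localization bounds and is the technical heart of the argument.
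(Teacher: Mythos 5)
The paper offers no proof of this proposition: it is imported verbatim from Krantz--Yu \cite{krantz1996bergman}, so there is no internal argument to compare against. Your sketch is, in outline, exactly the strategy of the cited reference: localize the Bergman kernel near the $C^2$ strictly pseudoconvex point via H\"ormander's $\bar\partial$-estimates, apply Pinchuk-type anisotropic dilations so that the rescaled domains converge to the ball (or Siegel) model, invoke stability of the Bergman kernel under this convergence, and transport the exact identities $\operatorname{Ric}(\omega_{\mathbb{B}^n})=-\omega_{\mathbb{B}^n}$, $R=-n$ back by biholomorphic invariance. So the architecture is sound and matches the source.

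One correction on where the difficulty actually sits. You identify the ``technical heart'' as securing uniform control of four derivatives of $\log K$ under only $C^2$ boundary regularity. In fact, once one knows that the Bergman kernels $K_{\Omega_j}$ of the rescaled domains converge locally uniformly to $K_{\mathbb{B}^n}$ on compact subsets of the limit domain, convergence of \emph{all} derivatives is automatic: $K_{\Omega_j}(z,\bar w)$ is holomorphic in $z$ and antiholomorphic in $w$, so Cauchy estimates upgrade locally uniform convergence to $C^\infty_{\mathrm{loc}}$ convergence for free, and with it convergence of the metric, Ricci and scalar curvature (one also needs a locally uniform positive lower bound on $K$ and nondegeneracy of the limit metric, which the ball model supplies). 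The genuinely delicate step is instead the locally uniform convergence of the kernels themselves, and in particular the \emph{lower} (anti-localization) bound: upper bounds come from the monotonicity/extension side of the $L^2$ theory, but the matching lower bound at a merely $C^2$ strictly pseudoconvex point requires the quantitative localization lemma (Fefferman's expansion being unavailable). Your proposal would be complete once that stability statement is stated and proved precisely; as written it gestures at it without isolating it as the load-bearing lemma.
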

This raises a natural question: under what conditions can the asymptotic K\"ahler--Einstein behavior of the Bergman metric be promoted to an exact global K\"ahler--Einstein identity on the whole domain?

The unit ball \(B^n\subset\CC^n\) is the basic example for which the Bergman metric is exactly K\"ahler--Einstein. More generally, one would like to understand when the Bergman metric on a bounded domain is K\"ahler--Einstein. Since both the Bergman metric and the K\"ahler--Einstein metric are biholomorphically invariant, this question is closely related to the global geometry and automorphism group of the domain. This perspective goes back to Yau, who conjectured the following:
\begin{conjecture}[Yau\cite{yaulecturedg}]
This raises a natural question: under what conditions can the asymptotic K\"ahler--Einstein behavior of the Bergman metric be promoted to an exact global K\"ahler--Einstein identity on the whole domain?
\end{conjecture}
Recall that a bounded domain \( \Omega \) is said to be \emph{homogeneous} if its automorphism group \( \operatorname{Aut}(\Omega) \) acts transitively; that is, for any \( x, y \in \Omega \), there exists a \( g \in \operatorname{Aut}(\Omega) \) such that \( g(x) = y \). Notably, the celebrated ball characterization theorem by Rosay\cite{rosay1979caracterisation} and Wong\cite{wong1977characterization} implies that any \( C^2 \) bounded homogeneous domain must be a ball. The product domain is automatically excluded in this case since the corner of the product boundary has only $C^0$ regularity.  

A more tractable case of Yau’s conjecture is when $\Omega$ is strictly pseudoconvex with $C^\infty$-boundary. In particular, Cheng asked:
\begin{conjecture}[Cheng\cite{Cheng'sconjecture}]
Let $\Omega \subset \mathbb{C}^n$ be a bounded strictly pseudoconvex domain with $C^\infty$ boundary. If $\omega_B$ is K\"ahler-Einstein, then $\Omega$ is biholomorphic to the ball. 
\end{conjecture}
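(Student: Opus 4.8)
The plan is to translate the \emph{global} Einstein condition on $\omega_B$ into a statement about the \emph{boundary} CR geometry of $\partial\Omega$, show that it forces $\partial\Omega$ to be spherical (locally CR-equivalent to the sphere $S^{2n-1}$), and then invoke a uniformization theorem to deduce that a smoothly bounded strictly pseudoconvex domain with spherical boundary is biholomorphic to the ball. The bridge between the interior Einstein equation and the boundary CR invariant is Fefferman's asymptotic expansion of the Bergman kernel, which is the organizing tool throughout.

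First I would rewrite the Einstein equation as a complex Monge--Amp\`ere equation for the kernel. By the Krantz--Yu asymptotics the correct normalization is $\operatorname{Ric}(\omega_B)=-\omega_B$, and using $\operatorname{Ric}(\omega_B)=-\sqrt{-1}\,\partial\bar\partial\log\det(g_{i\bar j})$ together with $\omega_B=\sqrt{-1}\,\partial\bar\partial\log K$ one finds that $\log\det(g_{i\bar j})-\log K$ is pluriharmonic. Since $\det(g_{i\bar j})=J(K)/K^{\,n+1}$, where $J(K)$ denotes Fefferman's Monge--Amp\`ere (bordered complex Hessian) determinant of $K$, the Einstein condition becomes
\[
J(K)=K^{\,n+2}\,|f|^{2}
\]
for a local nonvanishing holomorphic $f$; one checks directly on $B^n$, where $K=c\,(1-|z|^2)^{-(n+1)}$, that $f$ is constant. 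Thus $K$ solves an \emph{exact} Monge--Amp\`ere equation up to a pluriharmonic factor.

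Next I would substitute Fefferman's expansion $K=\phi\,\rho^{-(n+1)}+\psi\log\rho$, with $\phi,\psi\in C^\infty(\bar\Omega)$ and $\rho$ a Fefferman defining function, into this equation. Because $J$ sends $\rho^{-(n+1)}$ to the real Monge--Amp\`ere operator $J[\rho]$, matching orders at the boundary shows that the Einstein condition forces $\rho$ to solve $J[\rho]=1$ to infinite order, equivalently that the log-term coefficient $\psi$ vanishes to infinite order on $\partial\Omega$. The infinite-order vanishing of $\psi$ is precisely the vanishing of Fefferman's CR obstruction, and one must then upgrade obstruction-flatness to \emph{sphericity} of $\partial\Omega$. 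I expect this step to be the main obstacle: in general dimension the vanishing of the obstruction is strictly weaker than Cartan/Chern--Moser flatness, so extracting sphericity requires the sharper CR-invariant analysis of Huang--Xiao rather than a formal matching of leading coefficients, and it also requires controlling the Bergman asymptotics uniformly up to the boundary.

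Finally, with $\partial\Omega$ spherical one may work in real-analytic CR coordinates (a spherical structure is locally equivalent to the real-analytic model sphere), so the developing map of the flat CR structure is defined locally and analytically continued along paths. The last step is then the uniformization theorem that a bounded strictly pseudoconvex domain with spherical boundary is biholomorphic to the ball \cite{huang2021bergman}, whose rigidity ultimately rests on the Wong--Rosay characterization \cite{rosay1979caracterisation,wong1977characterization}. The delicate point here is global rather than local: the developing map is \emph{a priori} multivalued, so one must rule out nontrivial monodromy and verify that the resulting map is proper onto $B^n$, which is where the topology of $\Omega$ (not assumed simply connected) must be handled.
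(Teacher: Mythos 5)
The statement you are addressing is presented in the paper as a \emph{conjecture} and is not proved there: the paper simply records Huang--Xiao's resolution as Theorem~\ref{HuangXiao} and uses it as a black box. There is therefore no internal proof to compare against; what you have written is an outline of the external Huang--Xiao argument \cite{huang2021bergman} itself.

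As such an outline it identifies the correct architecture (Einstein equation $\Rightarrow$ Monge--Amp\`ere identity $J(K)=K^{n+2}|f|^2$ $\Rightarrow$ analysis of Fefferman's expansion $\Rightarrow$ sphericity of $\partial\Omega$ $\Rightarrow$ uniformization), but it has genuine gaps at exactly the two decisive steps, both of which you flag without closing. First, the claim that matching orders forces $J[\rho]=1$ to infinite order, ``equivalently'' that the log-term coefficient $\psi$ vanishes to infinite order, conflates two different conditions --- obstruction flatness of $\partial\Omega$ versus log-term-freeness of the Bergman kernel, whose relation is essentially the Ramadanov problem --- and neither condition is known to imply sphericity for $n\ge 2$. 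Huang--Xiao's actual proof does not route through obstruction flatness: it extracts sphericity from a finite, explicitly computed portion of the Fefferman expansion in Chern--Moser normal form, and that computation is the heart of their paper, not a formality one can wave at. Second, sphericity of the boundary only yields that $\Omega$ is uniformized by the ball; to conclude that $\Omega$ \emph{is} the ball one must rule out nontrivial quotients and nontrivial monodromy of the developing map, e.g.\ by producing a boundary orbit accumulation point and invoking Wong--Rosay \cite{rosay1979caracterisation,wong1977characterization}. Since both steps are deferred to the very reference the paper already cites, your proposal is a correct roadmap of the known proof rather than a proof.
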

This conjecture was first studied by Fu and Wong \cite{fu1997strictly}, and Nemirovski and Shafikov \cite{nemirovski2006conjectures} in complex dimension $2$. In 2021, Huang and Xiao answered Cheng's conjecture affirmatively.
\begin{theorem}[Huang-Xiao\cite{huang2021bergman}]\label{HuangXiao}
    The Bergman metric of a bounded strictly pseudoconvex domain $\Omega$ with $C^\infty$-boundary is K\"ahler-Einstein if and only if the domain is biholomorphic to the ball.
\end{theorem}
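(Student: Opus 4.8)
The statement is an equivalence, and the plan is to dispatch the ``if'' direction by an explicit computation and to attack the ``only if'' direction by reducing the Einstein condition to the vanishing of a local CR invariant of $\partial\Omega$. For the ``if'' direction, biholomorphic invariance of the Bergman kernel shows that $\omega_B$ is preserved under biholomorphisms, so it suffices to verify the claim on the unit ball $B^n\subset\CC^n$. There $K_{B^n}(z)=\frac{n!}{\pi^n}(1-|z|^2)^{-(n+1)}$, whence $\omega_B=\sqrt{-1}\partial\bar\partial\log K_{B^n}=(n+1)\,\sqrt{-1}\partial\bar\partial\bigl(-\log(1-|z|^2)\bigr)$ is a constant multiple of the complex hyperbolic metric and is therefore K\"ahler--Einstein.

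For the ``only if'' direction I would first recast the Einstein condition analytically. Writing $g_{i\bar j}=\partial_i\partial_{\bar j}\log K$, one has $\operatorname{Ric}(\omega_B)=-\sqrt{-1}\partial\bar\partial\log\det(g_{i\bar j})$, so $\operatorname{Ric}(\omega_B)=-\omega_B$ is equivalent to $\log\det(g_{i\bar j})-\log K$ being pluriharmonic. Combining this with the uniqueness of the complete K\"ahler--Einstein metric and Cheng--Yau's construction $\omega_{KE}=-(n+1)\sqrt{-1}\partial\bar\partial\log u$, where $u>0$ solves the Fefferman Monge--Amp\`ere equation $J(u)=1$, the hypothesis $\omega_B=\omega_{KE}$ translates into a local identity $K=u^{-(n+1)}|h|^2$ for a nonvanishing holomorphic $h$. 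The next step is to feed in the two canonical boundary expansions: Fefferman's expansion $K=\phi\,\rho^{-(n+1)}+\psi\log\rho$ with $\phi,\psi\in C^\infty(\bar\Omega)$, and the Lee--Melrose expansion of $u$, whose logarithmic term is governed by Fefferman's obstruction, a local CR invariant of $\partial\Omega$. Since the factor $|h|^2$ is smooth and log-free, matching the two expansions through $K=u^{-(n+1)}|h|^2$ forces the logarithmic coefficient, equivalently the CR obstruction, to vanish to infinite order along $\partial\Omega$.

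The decisive geometric input is then the CR-invariant theory of Fefferman's program, in the form developed by Graham and Hirachi: vanishing of the boundary log term to all orders forces the strictly pseudoconvex hypersurface $\partial\Omega$ to be \emph{spherical}, i.e.\ locally CR-equivalent to $S^{2n-1}=\partial B^n$. One then globalizes: the Chern--Moser developing map sends $\partial\Omega$ locally and CR-invertibly into $S^{2n-1}$, and a monodromy/analytic-continuation argument — using strict pseudoconvexity, properness, and the completeness of $\omega_B$ — upgrades this local CR equivalence to a biholomorphism $\Omega\to B^n$, completing the proof.

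I expect the main obstacle to lie in two places. The first is showing rigorously that the Einstein condition genuinely forces the CR obstruction to vanish: one must control the interplay between the logarithmic terms of $K$ and of the Cheng--Yau potential and verify that the global pluriharmonic ambiguity $|h|^2$ cannot absorb or hide the obstruction. The second is the implication ``vanishing obstruction $\Rightarrow$ spherical'' together with the final passage from a spherical boundary to a global biholomorphism with the ball; in general dimension this is the Ramadanov-type direction and is itself a substantial rigidity statement. These are precisely the points where the difficulty of Huang--Xiao's argument is concentrated, and where one cannot avoid the heavy machinery of CR invariant theory.
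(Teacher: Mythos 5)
First, a point of context: the paper does not prove this statement at all --- it is quoted verbatim as an external input, cited to Huang--Xiao \cite{huang2021bergman}, and then used as a black box in Section~5. So there is no internal proof to compare yours against; what can be assessed is whether your outline would actually constitute a proof.

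Your ``if'' direction is fine, and the opening moves of the ``only if'' direction (Einstein condition $\Leftrightarrow$ $\log\det(g_{i\bar j})-\log K$ pluriharmonic; comparison with the Cheng--Yau potential; Fefferman and Lee--Melrose expansions) are indeed the right ingredients. One omission worth flagging: to pass from ``pluriharmonic'' to an exact global identity such as $\det(g_{i\bar j})=cK$ you need Diederich's boundary limit of the Bergman invariant together with the maximum principle --- exactly the mechanism the paper uses in its Proposition~5.8 --- and without it the ``pluriharmonic ambiguity $|h|^2$'' you worry about is not controlled. But the genuine gap is your ``decisive geometric input'': the implication ``log term of $K$ vanishes to infinite order along $\partial\Omega$ $\Rightarrow$ $\partial\Omega$ spherical'' is the Ramadanov conjecture, which is a theorem only for $n=2$ (Graham, Boutet de Monvel) and remains open for $n\ge 3$; likewise ``CR obstruction flat $\Rightarrow$ spherical'' is not available as a black box (obstruction-flat, non-spherical CR structures exist). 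So the step you invoke from ``Graham and Hirachi'' is not a citable theorem, and your argument does not close. Huang--Xiao's actual route to sphericity is different and is the substance of their paper: from $\det(g_{i\bar j})=cK$ they deduce that $K^{-1/(n+1)}$ exactly solves Fefferman's Monge--Amp\`ere equation, and then extract sphericity from a direct analysis of the expansion coefficients rather than from Ramadanov-type statements; the final passage from a spherical boundary to a biholomorphism with the ball is likewise a uniformization theorem they prove (a Stein space with compact spherical strictly pseudoconvex boundary is a ball quotient), not a routine monodromy argument. In short: correct skeleton, but the load-bearing step is replaced by an open conjecture.
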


Given the rigidity of this result, it is natural to ask whether similar characterizations can be obtained under weaker curvature conditions.

\begin{question}
Can we characterize bounded domains whose Bergman metric has constant scalar curvature?
\end{question}

Let $G:=\det\left(\omega_{B}\right)$ denote the determinant of the Bergman metric. The Bergman invariant function defined by $B(z) := G(z)/K(z)$ is invariant under biholomorphic maps. A significant result regarding the Bergman invariant function, established by Diederich, states that if a bounded strictly pseudoconvex domain has $C^2$-boundary, then $B$ tends to constant when we approach the boundary.
\begin{proposition}[Diederich\cite{diederich1970randverhalten}]\label{Diederich}
    Let $\Omega \subset \mathbb{C}^n$ be a bounded pseudoconvex domain and let $p \in \partial \Omega$ be a $C^2$ strictly pseudoconvex point. Then
    \begin{align*}
        \lim_{z\rightarrow p}B(z)=\frac{(n+1)^n\pi^n}{n!}.
    \end{align*}
\end{proposition}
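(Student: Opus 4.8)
The plan is to deduce the boundary limit from two ingredients: the \emph{biholomorphic invariance} of the Bergman invariant function $B=G/K$, and a localization argument that compares $\Omega$ near the strictly pseudoconvex point $p$ to a model domain whose invariant equals the asserted constant. First I would record the transformation laws under a biholomorphism $f\colon\Omega\to\Omega'$: on the diagonal $K_\Omega(z)=|\det f'(z)|^2\,K_{\Omega'}(f(z))$, and since $\log K_\Omega=\log K_{\Omega'}\circ f+\log|\det f'|^2$ with the last term pluriharmonic, one has $f^*\omega_{B}^{\Omega'}=\omega_{B}^{\Omega}$ and hence $G_\Omega(z)=|\det f'(z)|^2\,G_{\Omega'}(f(z))$. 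Dividing, the Jacobian factors cancel and $B_\Omega(z)=B_{\Omega'}(f(z))$, so $B$ is a biholomorphic invariant.

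Next I would compute $B$ for the model. On the unit ball $B^n\subset\CC^n$ the Bergman kernel is $K(z)=\tfrac{n!}{\pi^n}(1-|z|^2)^{-(n+1)}$, so $\log K$ differs from $-(n+1)\log(1-|z|^2)$ by a constant; a direct Hessian computation at the origin gives $g_{i\bar j}(0)=(n+1)\delta_{ij}$, whence $G(0)=(n+1)^n$ and $K(0)=n!/\pi^n$, so that $B(0)=\frac{(n+1)^n\pi^n}{n!}$. Because $B^n$ is homogeneous and $B$ is invariant, this value is attained at every point of the ball. Equivalently one may use the Siegel model $\{\operatorname{Im}w_n>|w'|^2\}$, which is biholomorphic to the ball and carries the same invariant.

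Finally I would pass to a general domain by a scaling/localization argument. Fixing a sequence $z_j\to p$, I would compose a polynomial holomorphic change of coordinates straightening $\partial\Omega$ to second order at $p$ with an anisotropic dilation adapted to the Levi form; strict pseudoconvexity together with $C^2$ regularity forces the rescaled domains to converge, in the Carath\'eodory kernel sense, to the Siegel model, and invariance of $B$ reduces the problem to stability of the Bergman data under this convergence. The hard part will be precisely this last step. While $K$ itself is controlled near $p$ rather directly by monotonicity under inclusion (squeezing $\Omega$ between osculating balls), the determinant $G=\det(\partial\bar\partial\log K)$ requires uniform control of the \emph{second} derivatives of $\log K$ up to the boundary, and at the low regularity $C^2$ one cannot invoke Fefferman's asymptotic expansion; the needed derivative convergence under rescaling must instead be extracted from normal-families and interior Schauder-type estimates for the rescaled kernels, together with positivity/non-degeneracy of the limiting kernel.
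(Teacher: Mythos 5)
The paper does not prove this proposition: it is imported as a known result (attributed to Diederich, and in the stated $C^2$ generality really due to Krantz--Yu, whom the paper also cites for the companion curvature limits). So the only question is whether your sketch would stand on its own. Your first two steps are correct and complete: the transformation law $K_\Omega(z,w)=\det f'(z)\,\overline{\det f'(w)}\,K_{\Omega'}(f(z),f(w))$ does give $B_\Omega=B_{\Omega'}\circ f$, and the ball computation yielding $B\equiv (n+1)^n\pi^n/n!$ is right. These are exactly the standard reductions.

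The third step is where all the content lies, and as written it is a genuine gap --- which you acknowledge, but your diagnosis of how to close it is slightly off. First, note that $G=\det\left(\partial\bar{\partial}\log K\right)$ involves derivatives of the diagonal restriction $z\mapsto K(z,z)$, which is not holomorphic; the correct device is to work with the polarized kernel $K(z,w)$, holomorphic in $z$ and $\bar{w}$, so that locally uniform convergence of the rescaled kernels \emph{off the diagonal} upgrades automatically to $C^\infty_{\mathrm{loc}}$ convergence of all derivatives by Cauchy estimates. No Schauder-type interior estimates are needed, but you do need kernel convergence as a function of two variables, not merely of $K(z,z)$, and your sketch only discusses the diagonal. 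Second, the locally uniform convergence of the rescaled kernels to the Siegel-domain kernel is itself the nontrivial input at $C^2$ regularity: monotonicity under inclusion between osculating balls only controls the diagonal values, and the genuine ingredients are a localization theorem for the Bergman kernel near a strictly pseudoconvex boundary point (obtained from H\"ormander's $L^2$ estimates for $\bar{\partial}$) together with a Ramadanov-type stability theorem for the scaled domains. Until those two steps are supplied, the argument does not establish the limit of $G$, hence not of $B$.
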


We then have the following statement, as an extending for Huang and Xiao's resolution of Cheng's conjecture.
\begin{proposition}
    Let $\Omega$ be a bounded strictly pseudoconvex domain in $\mathbb{C}^n$ with $C^2$ boundary and let $\omega_B$ be the Bergman metric. If $\omega_B$ has constant scalar curvature, then $\omega_B$ is K\"ahler-Einstein. Moreover, if $\partial \Omega \in C^\infty$, then $\Omega$ is biholomorphic to the ball.
\end{proposition}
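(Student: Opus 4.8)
The plan is to argue directly rather than verifying the hypotheses of the general rigidity theorem (Theorem~\ref{MT1}), exploiting the fact that for the Bergman metric the tensor $\ric(\omega_B)+\omega_B$ carries an explicit global potential. Recall $\omega_B=\sqrt{-1}\pa\dpa\log K$ and $B=G/K$ with $G=\det(\omega_B)$, so that $\log G=\log B+\log K$. Since the Ricci form of any K\"ahler metric equals $-\sqrt{-1}\pa\dpa\log\det$, I would first record the identity
\begin{align}\label{bergman-ric-id}
    \ric(\omega_B)=-\sqrt{-1}\pa\dpa\log G=-\sqrt{-1}\pa\dpa\log B-\omega_B,
\end{align}
i.e. $\ric(\omega_B)+\omega_B=-\sqrt{-1}\pa\dpa\log B$. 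Thus $\omega_B$ is K\"ahler--Einstein if and only if $\log B$ is $\omega_B$-pluriharmonic, and the entire problem reduces to showing that $\log B$ is constant.

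Taking the trace of \eqref{bergman-ric-id} with respect to $\omega_B$ gives $R(\omega_B)+n=-\Delta_{\omega_B}\log B$. Next I would pin down the value of the constant scalar curvature: by Krantz--Yu \cite{krantz1996bergman}, $R(\omega_B)\to -n$ at every $C^2$ strictly pseudoconvex boundary point, and since $R(\omega_B)$ is constant by hypothesis it must equal $-n$ identically. Hence $\Delta_{\omega_B}\log B\equiv 0$, so $\log B$ is a harmonic function on the complete manifold $(\Omega,\omega_B)$. For its boundary behaviour I would invoke Diederich's theorem (Proposition~\ref{Diederich}): as $\Omega$ is strictly pseudoconvex with $C^2$ boundary, every boundary point is $C^2$ strictly pseudoconvex, whence $B(z)\to (n+1)^n\pi^n/n!$ as $z\to\pa\Omega$. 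Therefore $\log B$ extends continuously to $\bar\Omega$ with a single constant boundary value, and in particular is bounded on $\Omega$.

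It then remains to run a Liouville-type maximum principle exactly as in Proposition~\ref{CY case}. Set $c:=\log\big((n+1)^n\pi^n/n!\big)$ and $v:=\log B-c$, a bounded harmonic function on the complete manifold $(\Omega,\omega_B)$ with $v\to 0$ at infinity (the Bergman-distance infinity being precisely $\pa\Omega$). For each $\varepsilon>0$ the set $\{|v|\ge\varepsilon\}$ is a compact subset of $\Omega$, so a positive value of $\sup v$ would be attained at an interior maximum; by the strong maximum principle this forces $v$ to be constant, which is incompatible with $v\to 0$ unless $v\equiv 0$, and the symmetric argument controls $\inf v$. Hence $\log B\equiv c$, so $\sqrt{-1}\pa\dpa\log B=0$ and \eqref{bergman-ric-id} yields $\ric(\omega_B)=-\omega_B$; that is, $\omega_B$ is K\"ahler--Einstein. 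For the final clause, once $\omega_B$ is K\"ahler--Einstein and $\pa\Omega\in C^\infty$, Huang--Xiao's theorem (Theorem~\ref{HuangXiao}) immediately gives that $\Omega$ is biholomorphic to the unit ball.

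The step I expect to be the main obstacle is the maximum-principle/Liouville argument rather than the algebra: one must ensure that $\log B$ is genuinely \emph{bounded} and that $v\to 0$ \emph{uniformly} at infinity, which requires the continuous extension of Proposition~\ref{Diederich} at \emph{every} boundary point with the \emph{same} limiting constant, together with completeness of $\omega_B$ so that $\pa\Omega$ is exactly the metric infinity. The identity \eqref{bergman-ric-id} and the reduction to harmonicity are formal; the analytic content lies entirely in matching the interior harmonicity of $\log B$ (coming from $R\equiv -n$ via Krantz--Yu) with its constant boundary value (coming from Diederich), and in the Liouville principle that a bounded harmonic function vanishing at infinity on a complete manifold vanishes identically.
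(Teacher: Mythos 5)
Your proposal is correct and follows essentially the same route as the paper: the identity $\ric(\omega_B)+\sqrt{-1}\pa\dpa\log B=-\omega_B$, tracing to get harmonicity of $\log B$, Diederich's constant boundary value plus the maximum principle to force $\log B$ constant, and Huang--Xiao for the final clause. The only difference is cosmetic: you make explicit the step (implicit in the paper) that Krantz--Yu's boundary asymptotics pin the constant scalar curvature to $-n$, and you conclude K\"ahler--Einstein directly from the identity rather than citing Fu's characterization.
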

\begin{proof}
  Note that, for the Bergman metric $\omega_B$, 
\begin{align} \label{Bergman eq}
\ric(\omega_B)+\sqrt{-1}\pa\dpa\log B = - \omega_B
\end{align}
Taking trace with respect to $\omega_B$, we have
\begin{align*}
    \Delta_{\omega_B} \log B = 0 \quad \text{in} \quad \Omega,
\end{align*}
which implies $\log B$ is harmonic with a constant boundary value (Proposition \ref{Diederich}). By the maximum principle, it follows that \(\log B\) must be constant throughout \(\Omega\), specifically
\[
    \log B = \log\left( \frac{(n+1)^n \pi^n}{n!} \right).
\]
Combining this with the characterization in \cite[Proposition 1.1]{fu1997strictly}, we conclude that \(\omega_B\) is Kähler-Einstein. The final statement then follows immediately from Theorem \ref{HuangXiao}.
\end{proof}


\bibliographystyle{amsalpha}
\bibliography{wpref}
\end{document}